\newcommand{\nt}{\op{int}}
\newcommand{\wt}{\widetilde}
\newcommand{\cech}{c^{\op{ECH}}}
\newcommand{\pr}{\op{pr}}
\newcommand{\id}{\op{Id}}
\newcommand{\pt}{\op{pt}}
\title{On Symplectic Packing Problems in Higher Dimensions}
\date{\today}
\author{Kyler Siegel, Yuan Yao}
\begin{document}
\maketitle

\begin{abstract}
Let $B^{2n}(R)$ denote the closed $2n$-dimensional symplectic ball of area $R$,
and let $\Sigma_g(L)$ be a closed symplectic surface of genus $g$ and area $L$.
We prove that there is a symplectic embedding $\bigsqcup\limits_{i=1}^k B^4(R_i) \times \Sigma_g (L) \hooksymp \nt(B^4(R))\times \Sigma_g (L)$ if and only if there exists a symplectic embedding $\bigsqcup\limits_{i=1}^k B^4(R_i) \hooksymp \nt(B^4(R))$. 

This lies in contrast with the standard higher dimensional ball packing problem $\bigsqcup\limits_{i=1}^k B^{2 n}(R_i) \hooksymp \nt(B^{2n}(R))$ for $n >2$, which we conjecture (based on index behavior for pseudoholomorphic curves) is controlled entirely by Gromov's two ball theorem and volume considerations.
We also deduce analogous results for stabilized embeddings of concave toric domains into convex domains, and we establish a stabilized version of Gromov's two ball theorem which holds in any dimension.
Our main tools are: (i) the symplectic blowup construction along symplectic submanifolds, (ii) an h-principle for symplectic surfaces in high dimensional symplectic manifolds, and (iii) a stabilization result for pseudoholomorphic holomorphic curves of genus zero.

\end{abstract}

\tableofcontents
 
\section{Introduction}
\subsection{Background and main results}

The symplectic ball packing problem, which asks when a disjoint union of $2n$-dimensional symplectic balls of prescribed sizes can be symplectically embedded into a fixed $2n$-dimensional symplectic ball, is similar in spirit but quite distinct in substance from its more famous Riemannian cousin, namely the sphere packing problem (see e.g. \cite{conway2013sphere}).
Let $B^{2n}(R) := \left \{\pi \sum\limits_{i=1}^n|z_i|^2 \leq R \right\} \subset \C^n$ denote the closed  $2n$-dimensional ball of area $R$ (i.e. radius $\sqrt{R/\pi})$, and let $\nt(B^{2n}(R))$ denote its interior.
We endow these with the restriction of the standard symplectic two-form $\omega_\std = \sum\limits_{i=1}^n dx_i \wedge dy_i$.
Classically, note that any symplectic embedding
$\bigsqcup\limits_{i=1}^k B^{2n}(R_i) \hooksymp \nt(B^{2n}(R))$
must satisfy 
$\sum\limits_{i=1}^k R_i^n \leq R^n$,
 since symplectic embeddings preserve the standard volume form $\tfrac{1}{n!}\omega_\std^{\wedge n}$.
The first nontrivial result on symplectic ball packings appeared in Gromov's foundational 1985 paper on pseudoholomorphic curves.
\begin{theorem}[Gromov's two ball theorem \cite{gromov1985pseudo}]\label{thm:two_ball}
Given $n \in \Z_{\geq 1}$ and $R_1,R_2,R \in \R_{>0}$, any symplectic embedding $B^{2n}(R_1) \bigsqcup B^{2n}(R_2) \hooksymp \nt(B^{2n}(R))$ must satisfy $R_1 + R_2 < R$.
\end{theorem}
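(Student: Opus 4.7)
My plan is to compactify the target ball into complex projective space, produce a pseudoholomorphic sphere of minimal degree through the images of the two ball centers, and apply the monotonicity inequality to bound its area.

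First, I would identify $\nt(B^{2n}(R))$ symplectomorphically with the complement of a hyperplane $H \cong \mathbb{CP}^{n-1}$ in $(\mathbb{CP}^n, \omega_R)$, where $\omega_R$ is a scaling of the Fubini--Study form satisfying $\int_L \omega_R = R$ for the class $[L]$ of a complex line. The hypothesized embedding then extends to a symplectic embedding $\iota \colon B^{2n}(R_1) \sqcup B^{2n}(R_2) \hooksymp \mathbb{CP}^n$ whose image avoids $H$. Let $p_i := \iota(0_i) \in \mathbb{CP}^n$ denote the image of the center of the $i$-th ball.

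Next, I would choose an $\omega_R$-tame almost complex structure $J$ on $\mathbb{CP}^n$ that coincides with $\iota_* J_{\mathrm{std}}$ on each $\iota(B^{2n}(R_i))$. The crux step, which I expect to be the main obstacle, is to produce a $J$-holomorphic stable map $u \colon \Sigma \to \mathbb{CP}^n$ of genus zero in class $[L]$ with two marked points mapping to $p_1$ and $p_2$. For the standard complex structure this is immediate (the unique complex line through two points), and for arbitrary $\omega_R$-tame $J$ it follows from the nonvanishing of the two-point genus-zero Gromov--Witten invariant $\langle [\pt], [\pt] \rangle_{0,[L]}^{\mathbb{CP}^n} = 1$ together with Gromov compactness. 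Since $[L]$ generates $H_2(\mathbb{CP}^n;\mathbb{Z})$ and pairs positively with $\omega_R$, any stable-map decomposition of $[L]$ has a unique non-constant component: a $J$-holomorphic sphere $C$ in class $[L]$ (possibly with ghost bubbles attached), whose image passes through $p_1$ and $p_2$ and has total $\omega_R$-area exactly $R$.

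Finally, on each ball $\iota(B^{2n}(R_i))$ the almost complex structure $J$ is standard and $C$ passes through the center $p_i$, so the monotonicity inequality for proper pseudoholomorphic curves through a point in a Darboux ball yields
\[
\mathrm{area}\bigl(C \cap \iota(B^{2n}(R_i))\bigr) \;\geq\; R_i.
\]
Since the two balls are disjoint, summing over $i = 1, 2$ gives $R_1 + R_2 \leq R$. The strict inequality follows from the hypothesis that the embedding is into the open interior: the compact images $\iota(B^{2n}(R_i))$ are contained in $B^{2n}(R-\varepsilon)$ for some $\varepsilon > 0$, and running the same argument with $R-\varepsilon$ in place of $R$ yields $R_1 + R_2 \leq R - \varepsilon < R$.
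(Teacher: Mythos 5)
Your proof is correct, and it takes a genuinely different route from the one written in the paper. The paper compactifies $\nt(B^{2n}(R))$ to $\CP^n(R)$, blows up $\CP^n(R)$ along the two embedded balls, and then observes that the Gromov--Witten invariant $GW^{\bl_{\{\iota_i\}}\CP^n(R)}_{0,0,A-E_1-E_2}$ is nonzero; the energy of any curve in that class then directly computes to $R - R_1 - R_2 \geq 0$. You instead stay in $\CP^n$ itself, use the nonvanishing of the two-point invariant $\langle [\pt],[\pt]\rangle_{0,[L]}^{\CP^n}$ with a tame $J$ chosen to be standard inside each embedded ball, and then apply the monotonicity lemma on each ball to peel off area $\geq R_i$. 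Your version is closer to Gromov's original argument, and it has a technical advantage the authors themselves flag in a footnote: the blowup $\bl_{\{\iota_i\}}\CP^n(R)$ need not be semi-positive for $n \geq 4$, so the paper's phrasing implicitly invokes heavier GW machinery (or is meant as a template for the later index discussion around Conjecture~\ref{conj:high_d_ball_packing}), whereas $\CP^n$ is Fano and your invariant is defined by elementary means. In fact your monotonicity-based argument is essentially the one the paper uses for the stabilized two ball theorem (Theorem~\ref{thm:stabilized_Gromov_two_ball}). One small point worth making explicit: after Gromov compactness you want to rule out multi-component stable maps, and your remark that $[L]$ is primitive with minimal positive $\omega_R$-area (so no nonconstant bubbling can occur) is exactly the right justification; it would also be worth noting that the intersection of the resulting sphere with each open ball is a proper holomorphic curve through the center, which is what the monotonicity inequality requires.
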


Subsequently, in dimension four many more refined symplectic packing obstructions for multiple balls were found.
Notably, in the special case of balls of equal sizes, the supremal fraction $v(B^4,k)$ of the volume of $B^4 := B^4(1)$ that can be filled by a symplectic ball packing $\bigsqcup\limits_{i=1}^k B^4(R) \hooksymp \nt(B^4)$ for some $R \in \R_{>0}$ was computed in the combined works \cite{gromov1985pseudo,mcduff1994symplectic,biran1997symplectic} to be:

\begin{equation}\label{eq:equal_ball_nums}
 \begin{array}{|c|c|c|c|c|c|c|c|c|c|}
 \hline
k & 1 & 2 & 3 & 4 & 5 & 6 & 7 &8 & \geq 9\\ \hline 
\rule{0pt}{2.5ex} v(B^4,k) & 1 & \tfrac{1}{2} & \tfrac{3}{4} & 1 & \tfrac{20}{25} & \tfrac{24}{25} & \tfrac{63}{64} & \tfrac{288}{289}& 1\\[.5ex]\hline
 \end{array}\,.
\end{equation}
More generally, building on various works \cite{mcduff1994symplectic,mcduff_from_def_to_iso,biran1997symplectic,biran2001symplectic,li2001uniqueness,McDuff_2009,McDuff_Hofer_conjecture,hutchings_quant_ech}, the symplectic ball packing problem  in dimension $2n=4$ has been entirely computed, or at least reduced to combinatorics.
\begin{theorem}[{see \cite[Rmk. 1.10]{hutchings_quant_ech}}]\label{thm:combinatorial_packing_obstructions} 
There exists a symplectic embedding $\bigsqcup\limits_{i=1}^k B^4(R_i) \hooksymp \nt( B^4(R))$ if and only if for each\footnote{In fact, using \cite{mcduff1994symplectic,li_li_2002} it suffices to consider only certain distinguished tuples related to the Cremona group -- see \cite[\S1.2]{Mcduff_Schlenk_infinite_staircase}.} tuple of nonnegative integers $(d;m_1,\dots,m_k)$ (not all zero) satisfying 
\[
\sum_{i=1}^k (m_i^2+m_i) \leq d^2 +3d
\] 
we have
\[
\sum_{i=1}^k m_i R_i < dR.
\]
\end{theorem}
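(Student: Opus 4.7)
The plan is to reduce the packing problem to a question about the symplectic cone of the $k$-fold blowup $X_k := \mathbb{CP}^2 \# k\overline{\mathbb{CP}^2}$, and then to extract both obstructions and existence from Taubes' Seiberg--Witten to Gromov correspondence. First I would invoke the McDuff--Polterovich dictionary: after one-point compactifying $B^4(R)$ to $\mathbb{CP}^2$ (at the cost of an arbitrarily small parameter that can be absorbed into the strict inequality in the conclusion) and symplectically blowing up along the embedded balls, a packing $\bigsqcup_{i=1}^k B^4(R_i) \hooksymp \nt(B^4(R))$ is equivalent to the existence of a symplectic form on $X_k$ in the cohomology class $R \cdot \mathrm{PD}(L) - \sum_{i=1}^k R_i \cdot \mathrm{PD}(E_i)$, where $L$ is the hyperplane class and $E_1, \ldots, E_k$ are the exceptional divisors. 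The question thus becomes: which such classes lie in the symplectic cone of $X_k$?

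For the necessity direction, pick an $\omega$-tame almost complex structure $J$; for any embedded $J$-holomorphic sphere representing a class $dL - \sum m_i E_i$ with self-intersection $-1$ and pairing $1$ against $c_1$, positivity of area yields $\sum m_i R_i < dR$, while the two numerical constraints together force $\sum(m_i^2 + m_i) = d^2 + 3d$. Existence of such a representative in every admissible homology class follows from Taubes' $\mathrm{SW} = \mathrm{Gr}$ combined with the Li--Liu classification of Seiberg--Witten basic classes on small rational surfaces. Tuples achieving strict inequality $\sum(m_i^2 + m_i) < d^2 + 3d$ yield no new obstructions, by the Cremona reduction alluded to in the theorem's footnote. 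For sufficiency, I would combine the Li--Liu / McDuff--Polterovich description of the symplectic cone of $X_k$ -- namely, the classes $\alpha$ with $\alpha^2 > 0$ and $\alpha \cdot E > 0$ for every exceptional class $E$ -- together with McDuff's result that any cohomology class in the symplectic cone is realized by a symplectic form deformation-equivalent to a blowup of $(\mathbb{CP}^2, \omega_{\mathrm{std}})$. One then ``uncompactifies'' this form to recover an honest ball packing via McDuff's isotopy-to-embedding machinery.

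The principal obstacle is assembling these deep ingredients consistently and managing the dictionary between the Diophantine hypothesis of the theorem and the geometric condition of positivity against exceptional homology classes. In particular, one must verify that the exceptional classes in $X_k$ are parametrized precisely by tuples satisfying the equality case $\sum(m_i^2 + m_i) = d^2 + 3d$, and that the inequalities imposed by tuples with $\leq$ are implied by those coming from honest exceptional classes (or reducible configurations thereof). Once the Taubes--Seiberg--Witten input is granted, this combinatorial reduction -- via the Cremona group action on homology -- is the only remaining nontrivial step, and it is what permits the clean combinatorial statement of the theorem.
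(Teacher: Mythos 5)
The paper does not supply a proof of Theorem~\ref{thm:combinatorial_packing_obstructions}: it is imported verbatim from Hutchings' quantitative ECH paper, with the note that the obstructions may be derived either from exceptional curves in blowups of $\CP^2$ or from ECH capacities. Your sketch takes the exceptional-curve route, and all the ingredients you invoke -- the McDuff--Polterovich correspondence between ball packings of $B^4(R)$ and symplectic forms on $X_k = \CP^2\#k\overline{\CP^2}$ in class $R\,\mathrm{PD}(L) - \sum R_i\,\mathrm{PD}(E_i)$, Taubes' $\mathrm{SW} = \mathrm{Gr}$ plus Li--Liu to produce embedded $J$-holomorphic representatives of exceptional classes, Li--Liu / McDuff for the description of the symplectic cone, and McDuff's from-deformation-to-isotopy uniqueness to convert a cohomology class in the cone back into an honest packing -- are the correct ones and match what the paper itself invokes later (in \S 5.1, Proposition~\ref{prop:exceptional_vector_obstruct} and its discussion).

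The one place where you have a genuine gap, and you flag it yourself, is the passage from the exceptional-class inequalities (which, as you correctly compute, sit on the equality locus $\sum(m_i^2+m_i) = d^2+3d$ of the Diophantine condition, together with volume) to the full $\leq$-indexed family in the theorem statement. Asserting that ``tuples with strict inequality yield no new obstructions by Cremona reduction'' is the conclusion you need, not a proof of it: one must actually show that every constraint $\sum m_iR_i < dR$ arising from a tuple satisfying $\sum(m_i^2+m_i)\le d^2+3d$ is a positive combination of exceptional-class constraints and the volume constraint, which is nonobvious and requires the concrete combinatorics of the Cremona move on $(d;m_1,\dots,m_k)$. This equivalence is precisely the content of McDuff's result cited in the paper as \cite[Prop.~3.2]{McDuff_Hofer_conjecture} (see also the footnote referencing \cite{mcduff1994symplectic,li_li_2002}), so what you've written is a correct outline that defers the same lemma the paper defers; it is not yet a self-contained proof. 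Beyond that, be careful that the symplectic cone description requires restricting to the chamber determined by the canonical class (equivalently, requiring each $E_i$ to pair positively rather than negatively with the class), and that the uncompactification step needs the McDuff--Opshtein style isotopy argument to pass from deformation equivalence to an actual embedding into $\nt(B^4(R))$; you gesture at these but they deserve explicit care.
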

\NI 
The obstructions involved in Theorem~\ref{thm:combinatorial_packing_obstructions} can be deduced either from exceptional curves in blowups of $\CP^2$, or using the embedded contact homology (ECH) capacities.

Symplectic embedding problems are well-studied in dimension four, partly due to the existence of powerful gauge theoretic invariants like Seiberg--Witten theory, and partly thanks to useful tools for pseudoholomorphic curves in low dimensions such as positivity of intersections and automatic transversality.
For comparison, in higher dimensions we have the following conjecture concerning symplectic ball packings.

\begin{conjlet}\label{conj:high_d_ball_packing}
For $n \geq 3$ and $R_1,\dots,R_k,R \in \R_{>0}$ for some $k \in \Z_{\geq 1}$, there exists a symplectic embedding
\begin{align*}
\bigsqcup_{i=1}^k  B^{2n}(R_i) \hooksymp \nt(B^{2n}(R))
\end{align*}
if and only if the following hold:
\begin{itemize}
  \item (volume obstruction) $R_1^n + \cdots + R_k^n < R^n$ 
  \item (two ball obstruction) $R_i + R_j < R$ for all $1 \leq i < j \leq k$.
\end{itemize}
\end{conjlet}

\NI Indeed, for essentially formal reasons (based on a Fredholm index computation in \S\ref{section:higher_dim_packing}) there cannot be any stronger obstructions coming from pseudoholomorphic curves, at least under the standard paradigm for obstructing symplectic embeddings via closed or punctured curves.
In the spirit of Y. Eliashberg's ``holomorphic curves or nothing'' philosophy, which posits that all nontrivial obstructions in symplectic geometry come from some kind of pseudoholomorphic invariants or classical invariants, we then expect higher dimensional ball packings to exhibit complete flexibility beyond the volume and two ball obstructions (see \S\ref{section:higher_dim_packing} for more on what is currently known).

Nevertheless, we show that a certain stabilization procedure exhibits considerable rigidity for symplectic packing problems in higher dimensions.  Fixing $g \in \Z_{\geq 1}$ and $L \in \R_{>0}$, let $\Sigma_g(L)$ denote a closed symplectic surface of genus $g$ and area $L$. 
Given a symplectic ball $B^{2n}(R)$, we equip $B^{2n}(R) \times \Sigma_g(L)$ with the product symplectic form.

Our main result is:
\begin{thmlet}\label{thm:main_thm}
There exists a symplectic embedding $\bigsqcup\limits_{i=1}^k B^4(R_i)\times \Sigma_g (L) \hooksymp \nt(B^4(R)) \times \Sigma_g (L)$ if and only if there exists a symplectic embedding $\bigsqcup\limits_{i=1}^k B^4(R_i) \hooksymp \nt(B^4(R))$. 
\end{thmlet}
\NI In particular, the same numbers appearing in \eqref{eq:equal_ball_nums} give the supremal volume of $B^4 \times \Sigma_g(L)$ which can be filled by a packing $\bigsqcup\limits_{i=1}^k B^4(R) \times \Sigma_g(L) \hooksymp \nt(B^4) \times \Sigma_g(L)$ for some $R \in \R_{>0}$ (for any fixed $L \in \R_{>0}$).
We summarize the proof strategy for Theorem~\ref{thm:main_thm} in \S\ref{section:Sketch_of_proof}, with the details appearing in \S\ref{sec:pf_main_thm}.

Using similar ideas as Theorem \ref{thm:main_thm}, we also deduce a ``stabilized'' version of Gromov's two ball theorem, which holds in any dimension:
\begin{thmlet}\label{thm:stabilized_Gromov_two_ball}
Let $N$ be a closed
 symplectic manifold which is symplectically aspherical.\footnote{Recall that a symplectic manifold $(M,\omega)$ is {\em symplectically aspherical} if $\omega$ vanishes on $\pi_2(M)$. We expect this assumption could be removed at the cost of using virtual fundamental class techniques for Gromov--Witten theory.} Then there exists a symplectic embedding $\left(B^{2n}(R_1) \times N\right) \sqcup \left(B^{2n} (R_2) \times N\right) \hooksymp \nt(B^{2n}(R)) \times N$ if and only if there exists a symplectic embedding $B^{2n}(R_1) \sqcup B^{2n} (R_2) \hooksymp \nt(B^{2n}(R))$.
\end{thmlet}

We give a general discussion of what our methods produce for higher dimensional stabilized problems in Remark  \ref{rmk:stab_in_higher_dimensions}.

\subsubsection*{Other stabilized embedding problems}

Since ball packings are known to control various symplectic embeddings problems, we can use Theorem~\ref{thm:main_thm} to deduce analogous embedding results for stabilizations of other domains.
McDuff \cite{McDuff_2009} showed that each symplectic embedding problem between four-dimensional rational ellipsoids is equivalent to some symplectic ball packing problem, where the sizes of the balls are read off from the area factors of the ellipsoids. 
Cristofaro-Gardiner subsequently generalized this to a large class of four-dimensional toric domains as follows: 

\begin{theorem}[{\cite[Thm. 2.1]{DanCG_Concave_into_Convex}}]\label{thm:Dan_concave_into_convex}
Given a four-dimensional rational concave toric domain $X_{\Omega_1}^4$ and a four-dimensional rational convex toric domain $X_{\Omega_2}^4$, there exists a symplectic embedding $X_{\Omega_1} \hooksymp \nt(X_{\Omega_2})$ if and only if there exists a ball packing
\begin{align}
\left(\bigsqcup\limits_{i=1}^k B^4(a_i)\right) \sqcup \left(\bigsqcup\limits_{i=1}^\ell B^4(b_i)\right) \hooksymp \nt(B^4(b)),
\end{align}
where $(a_1,\dots,a_k)$ is the weight sequence of $\Omega_1$ and $(b;b_1,\dots,b_\ell)$ is the negative weight sequence of $\Omega_2$.
\end{theorem}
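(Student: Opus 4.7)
The plan is to chain two equivalences that respectively replace the source concave domain and the target convex domain with disjoint unions of balls, reducing everything to a single ball packing problem in $B^4(b)$.

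First I would prove a \emph{source reduction}: for any symplectic $4$-manifold $Y$, there exists a symplectic embedding $X_{\Omega_1} \hooksymp \nt(Y)$ if and only if there exists a ball packing $\bigsqcup_{i=1}^k B^4(a_i) \hooksymp \nt(Y)$. The forward direction comes from the inductive construction of the weight sequence: recursively inscribing maximal triangles in $\Omega_1$ yields an explicit symplectic embedding $\bigsqcup_{i=1}^k B^4(a_i) \hooksymp \nt(X_{\Omega_1})$ of full volume, so composing with $X_{\Omega_1} \hooksymp \nt(Y)$ gives the packing. The converse requires upgrading a ball packing to an embedding of the full toric domain; this is McDuff's ``packing equals embedding'' principle, which uses symplectic inflation along exceptional spheres in the blow-up of $Y$ at the packed balls to modify the symplectic form and produce the desired embedding.

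Next I would prove a \emph{target reduction}: for any symplectic $4$-manifold $Z$, there exists an embedding $Z \hooksymp \nt(X_{\Omega_2})$ if and only if there exists an embedding $Z \sqcup \bigsqcup_{i=1}^\ell B^4(b_i) \hooksymp \nt(B^4(b))$. This reflects the fact that the negative weight expansion encodes $X_{\Omega_2}$ as obtained from $B^4(b)$ by symplectically blowing up balls of sizes $b_i$; equivalently, $\nt(B^4(b))$ admits a full-volume embedding of $X_{\Omega_2} \sqcup \bigsqcup_{i=1}^\ell B^4(b_i)$. The forward direction is immediate from composition. For the converse, an embedding of $Z$ and the $B^4(b_i)$ into $B^4(b)$ allows one to perform the symplectic blow-ups along the $B^4(b_i)$ images inside a small perturbation, after which $Z$ lands in the complement of these exceptional neighborhoods, which is symplectomorphic to $\nt(X_{\Omega_2})$.

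Combining the two, one sets $Y = X_{\Omega_2}$ in the source reduction and $Z = \bigsqcup_{i=1}^k B^4(a_i)$ in the target reduction to obtain the chain
\[
X_{\Omega_1} \hooksymp \nt(X_{\Omega_2}) \iff \bigsqcup_{i=1}^k B^4(a_i) \hooksymp \nt(X_{\Omega_2}) \iff \bigsqcup_{i=1}^k B^4(a_i) \sqcup \bigsqcup_{i=1}^\ell B^4(b_i) \hooksymp \nt(B^4(b)).
\]
The main obstacle I expect is the converse direction of the source reduction, where a disjoint collection of balls must be completed to an embedding of a connected concave toric domain. This step is genuinely four-dimensional, relying on McDuff's inflation technique together with the Taubes' $\mathrm{SW} = \mathrm{Gr}$ correspondence to produce the required isotopies of symplectic forms on blow-ups. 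The target-side blow-up/blow-down equivalence, by contrast, is more formal and is a standard consequence of the symplectic cut construction applied to the moment map of $X_{\Omega_2}$.
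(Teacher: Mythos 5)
Your two-step reduction — replacing the concave source by its weight-sequence balls, then replacing the convex target by the big ball minus its negative-weight balls — is exactly the strategy of Cristofaro-Gardiner's proof, and the chain at the end is the correct logical skeleton. The one place where your account understates the difficulty is the converse of the \emph{target} reduction. You describe it as ``more formal'' and attribute it to the symplectic cut construction, but the cut construction only produces the \emph{standard} decomposition of $B^4(b)$ into $X_{\Omega_2}$ and balls of sizes $b_i$. Given an arbitrary symplectic embedding $Z \sqcup \bigsqcup_i B^4(b_i) \hooksymp \nt(B^4(b))$, the images of the $B^4(b_i)$ need not sit in that standard position, and ``blowing up along them'' does not directly identify the complement of $Z$ with $\nt(X_{\Omega_2})$. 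What is actually required is the uniqueness up to Hamiltonian isotopy of symplectic ball packings of $\CP^2$ (McDuff's connectedness-of-packings result, which like inflation ultimately rests on Taubes--Seiberg--Witten theory): one isotopes the given $B^4(b_i)$ to the standard positions while carrying $Z$ along. So both the source and target converses are ``genuinely four-dimensional'' in exactly the same sense, relying on the same $\mathrm{SW}=\mathrm{Gr}$ machinery rather than one being hard and the other formal. With that correction the proposal is sound and coincides with the cited proof.
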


We briefly recall the definitions of (rational) concave and convex toric domains and their weight sequences in \S\ref{sec:stabilized_toric}.
The basic idea is that a four-dimensional concave toric domain canonically decomposes into a collection of symplectic balls, and similarly for the complement of a four-dimensional convex toric domain inside of a larger circumscribed ball.
By adapting this picture to the stabilizations $X_{\Omega_i} \times \Sigma_g(L)$ and applying Theorem~\ref{thm:main_thm}, we get the following stabilized analogue of Theorem~\ref{thm:Dan_concave_into_convex}.
\begin{thmlet} \label{thm:stabilized_toric}
Let $X_{\Omega_1}^4$ be a four-dimensional concave toric domain, and let $X_{\Omega_2}^4$ be a four-dimensional convex toric domain. Let $\Sigma_g(L)$ be a closed symplectic surface of area $L$ and genus $g$. Then there exists a symplectic embedding 
\[
\nt(X_{\Omega_1}) \times \Sigma_g(L) \hooksymp \nt(X_{\Omega_2}) \times \Sigma_g(L)
\]
if and only if there exists a symplectic embedding
\[
\nt(X_{\Omega_1}) \hooksymp \nt(X_{\Omega_2}).
\]
\end{thmlet}

Combining this with the main result of \cite{DanCG_Concave_into_Convex}, which states that the ECH capacities give sharp obstructions for embeddings of concave into convex toric domains,

we have:
\begin{corlet}\label{cor:ECH_caps_sharp_for_stab_prob}
For $X_{\Omega_1}^4,X_{\Omega_2}^4$ as in Theorem~\ref{thm:stabilized_toric}, there exists a symplectic embedding 
\begin{align}\label{eq:intro_conc_into_conv_emb}
\nt(X_{\Omega_1}) \times \Sigma_g(L) \hooksymp \nt(X_{\Omega_2}) \times \Sigma_g(L)
\end{align}
 if and only if we have \[\cech_k(X_{\Omega_1}) \leq \cech_k(X_{\Omega_2})\] for all $k \in \Z_{\geq 1}$.
\end{corlet}
\NI The ECH capacities of $X_{\Omega_1}$ and $X_{\Omega_2}$ have purely combinatorial descriptions -- see \cite[\S A]{DanCG_Concave_into_Convex} and the references therein for more details.
It is noteworthy that the ECH capacities control the six dimensional embedding problem \eqref{eq:intro_conc_into_conv_emb}, even though they are only defined in dimension four.

Specializing to ellipsoids $E(a,b) := \{\pi|z_1|^2/a +\pi|z_2|^2/b\leq 1\} \subset \C^2$ (these are both concave and convex toric domains), embeddings of the form $E(a,b) \hooksymp \nt(B^4)$ were studied extensively in \cite{Mcduff_Schlenk_infinite_staircase}, where the function 
\[
f_{B^4}(x) = \inf \{\mu  \;|\; E(1,x) \hooksymp \nt(B^4(\mu))\}
\]
was explicitly computed and shown to contain an ``infinite staircase'' controlled by the Fibonacci numbers. By Theorem~\ref{thm:stabilized_toric}, this same function persists after stabilizing by $\Sigma_g(L)$.
\begin{corlet}\label{cor:MS_func_stabilizes}
For each $x \in \R_{\geq 1}$, there exists a symplectic embedding 
\[E(1,x) \times \Sigma_g(L) \hooksymp \nt(B^4(\mu)) \times \Sigma_g(L)\] if and only if we have $\mu > f_{B^4}(x)$.
\end{corlet}

Note that there has been considerable recent interest in pushing four dimensional symplectic  embedding results into higher dimensions by a different type of stabilization, namely taking products with $\R^2$, or more generally $\R^{2N}$.
By analogy with the definition of $f_{B^4}(x)$, for $N \in \Z_{\geq 1}$ put
\[
f_{B^4 \times \R^{2N}}(x) := \inf \{\mu  \;|\; E(1,x) \times \R^{2N} \hooksymp \nt(B^4(\mu)) \times \R^{2N}\}.
\]
It is known \cite{Hind_Kerman,CG_Hind_embedding_products,ghost_staircase,Mint,chscI} that, for any $N \in \Z_{\geq 1}$, $f_{B^4 \times \R^{2N}}(x)$ coincides with $f_{B^4}(x)$ for all $1 \leq x \leq \tau^4 := \tfrac{7+3\sqrt{5}}{2}$, while $f_{B^4 \times \R^{2N}}(x) = \tfrac{3x}{x+1}$ for infinitely many values of $x$ in $(\tau^4,\infty)$. 

\NI It is a subtle problem to determine which four-dimensional obstructions persist after stabilizing with $\R^{2N}$ and which do not. For instance, for $x$ sufficiently large we have $f_{B^4}(x) = \sqrt{x}$, i.e. volume is the only obstruction, while conjecturally (and definitely for certain values) we have $f_{B^4 \times \R^{2N}}(x) = \tfrac{3x}{x+1}$, coming from Hind's folding embedding \cite{Hind_optimal_embedding_ellipsoids}.
By contrast, Corollary~\ref{cor:MS_func_stabilizes} states that all four-dimensional obstructions persist after stabilizing with $\Sigma_g(L)$.

\begin{remark}
As another illustration of the difference between stabilizing with $\Sigma_g(L)$ versus $\R^{2N}$, let us note that in the latter case the analogue of Theorem~\ref{thm:main_thm} is entirely flexible.
Namely, for any $k \in \Z_{\geq 1}$ and $R_1,\dots, R_k,R \in \R_{>0}$ with $R_1,\dots,R_k \leq R$ there exists a symplectic embedding
    \[
    \bigsqcup_{i=1}^k B^4(R_i) \times \mathbb{R}^{2N} \hooksymp B^4(R) \times \mathbb{R}^{2N}.
    \] 
This is a consequence of the fact that we can symplectically embed a disjoint union of finitely many copies of $\R^{2N}$ into a single copy of $\R^{2N}$.
Note that the condition $R_1,\dots,R_k \leq R$ is necessary by Gromov's nonsqueezing theorem.
\end{remark}

\begin{remark}\label{rmk_open_vs_closed_domains}
In the embedding problems discussed above one could take the source to be either a compact domain or its interior, and similarly for the target, with the distinction between these choices being mostly academic (up to swapping inequalities with strict inequalities) since we are interested in {\em optimal} embeddings.
For a more refined discussion see also \cite[Rmk. 1.3]{McDuff_Hofer_conjecture} or \cite[Cor. 1.6]{DanCG_Concave_into_Convex}.
\end{remark}

\subsection{Open problems}
We now mention a few open problems which seem to go beyond the techniques of this paper.

\subsubsection*{More general stabilizations by symplectic surfaces}

The stabilized packing problem addressed by Theorem~\ref{thm:main_thm} has the following natural extension.
For $k \in \Z_{\geq 1}$, let $\Sigma_{g_1}(L_1),\dots,\Sigma_{g_k}(L_k)$ denote closed symplectic surfaces with genera $g_1,\dots,g_k \in \Z_{\geq 0}$ and areas $L_1,\dots,L_k \in \R_{>0}$ respectively.
\begin{question}
When does there exist a symplectic packing of the form
\begin{align*}
\bigsqcup\limits_{i=1}^k B^4(R_i) \times \Sigma_{g_i}(L_i) \hooksymp \nt(B^4(R)) \times \Sigma_{g}(L),
\end{align*}
for $R_1,\dots,R_k,R \in \R_{>0}$?
\end{question}

Note that by homology class considerations we must have $L_i = \kappa_i L$ for some $\kappa_i \in \Z_{\geq 1}$, for each $i = 1,\dots,k$.

\begin{example}
Let $S^2(L)$ denote the standard symplectic two-sphere normalized to have area $L \in \R_{>0}$, and 
consider the embedding problem 
\begin{align}\label{ex:kappa_L_into_L}
B^4(R) \times S^2(\kappa L) \hooksymp \nt(B^4(R')) \times S^2(L),
\end{align}
 for some fixed $\kappa \in \Z_{\geq 2}$.
By taking a generic smooth perturbation of the composition of the degree $\kappa$ branched covering $\CP^1 \ra \CP^1$, $z \mapsto z^\kappa$, with the inclusion $\CP^1 = \CP^1 \times \{0\}  \subset \CP^1 \times B^4$, we produce a symplectic submanifold of $\nt(B^4(R')) \times S^2(L)$ having area $\kappa L$.
Using the Weinstein neighborhood theorem, this extends to a symplectic embedding
$B^4(R) \times S^2(\kappa L) \hooksymp B^4(R') \times S^2(L)$ for some small $R > 0$.
It is interesting to ask how to construct such embeddings with maximal $R$.

\end{example}

\begin{remark}
    Note the analogue of \eqref{ex:kappa_L_into_L} in dimension four is vacuous, since symplectic embeddings of $S^2(\kappa L)$ into $\R^2 \times S^2(L)$ are prohibited by the adjunction formula.
\end{remark}

\subsubsection*{Stabilizing by higher dimensional manifolds}

In Theorem~\ref{thm:main_thm}, the fact that $\Sigma_g(L)$ is two-dimensional allows us to appeal to an h principle for symplectic surfaces in symplectic manifolds of dimension at least six (see \ref{thm:h_princ_symp_sub}).
As we explain in \S\ref{sec:h_principle}, a similar h principle holds for higher dimensional symplectic submanifolds of codimension at least four, but it is no longer guaranteed that the relevant formal obstructions vanish.
Nevertheless, we can ask:
\begin{question}
Let $N$ be a closed symplectic manifold of dimension $2n \geq 4$. Is it still true that $\bigsqcup\limits_{i=1}^k B^4(R_i) \times N \hooksymp B^4(R) \times N$ if and only if  $\bigsqcup\limits_{i=1}^k B^4(R_i) \hooksymp B^4(R)$?
\end{question}

\subsection{Sketch of proof of main theorem} \label{section:Sketch_of_proof}

We end this introduction by giving a brief outline of the proof of Theorem \ref{thm:main_thm}, while also outlining the main components of the paper. 
The ``if'' direction is immediate, since a four-dimensional embedding can be stabilized to a six-dimensional one. The proof of the ``only if'' direction is based on the symplectic ball packing obstructions coming from exceptional curves in blowups of $\CP^2$, following \cite{mcduff1994symplectic}. 
It relies on the following three observations.

\begin{itemize}
\item First we note that the existence of pairwise disjoint embeddings $\iota_i: B^4(R_i) \times \Sigma_g(L)\hooksymp \nt(B^4(R)) \times \Sigma_g(R)$ for $i = 1,\dots,k$ implies the existence of a symplectic blowup along the collection of symplectic surfaces $\iota_1(\{0\} \times \Sigma_g(L)),\dots,\iota_k(\{0\} \times \Sigma_g(L))$ in $\CP^2(R) \times \Sigma_g(L)$ with respective sizes $R_1,\dots,R_k$ (here $\CP^2(R)$ is equipped with the Fubini--Study form normalized so that a line has area $R$).
We denote this blowup by 
% $\bl_{\{\iota_i,R_i\}}(\CP^2(R) \times \Sigma_g(L))$. 
$\bl_{\{\iota_i\}}(\CP^2(R) \times \Sigma_g(L))$. 
The theory of symplectic blowup along submanifolds is reviewed in \S\ref{sec:symplectic_blow_up}.
\item Next, with the aid of an h principle, we can symplectically isotope the collection of symplectic surfaces 
$\iota_1(\{0\} \times \Sigma_g(L)),\dots,\iota_k(\{0\} \times \Sigma_g(L))$ in $\CP^2(R) \times \Sigma_g(L)$ to a collection of fibers $\{p_1\} \times \Sigma_g(L),\dots,\{p_k\} \times \Sigma_g(L)$ for some pairwise distinct $p_1,\dots,p_k \in \CP^2(R)$. 
We then find a symplectic deformation equivalence between $\bl_{\{\iota_i\}}(\CP^2(R) \times \Sigma_g(L))$ and the size $\eps$ blowup along the fibers $\{p_1\} \times \Sigma_g(L),\dots,\{p_k\} \times \Sigma_g(L)$ for some small $\eps > 0$,
which we denote by $\bl_{\{\hat{j}_i\}}(\CP^2(R) \times \Sigma_g(L))$ to be consistent with our later notation. 

\item  Finally, we observe in \S\ref{sec:h_principle} that the exceptional curves in blowups of $\CP^2(R)$ which give rise to packing obstructions in dimension four stabilize after crossing with $\Sigma_g(L)$. In particular, they are seen by Gromov-Witten invariants of $\bl_{\{\hat{j}_i\}}(\CP^2(R) \times \Sigma_g(L))$, and hence also of $\bl_{\{\iota_i\}}(\CP^2(R) \times \Sigma_g(L))$ by deformation invariance. Then the fact that the symplectic form must evaluate positively on these stabilized exceptional curves in $\bl_{\{\iota_i\}}(\CP^2(R) \times \Sigma_g(L))$ gives precisely the embedding obstructions of Theorem \ref{thm:main_thm}.

\end{itemize}

\subsection*{Acknowledgements}
Y.Y. would like to thank Dusa McDuff and Tian-Jun Li for helpful conversations. Y.Y. is partially supported by ERC Starting Grant No. 851701. K.S. is partially supported by NSF grant DMS-2105578.

\section{Blowing up along symplectic submanifolds} \label{sec:symplectic_blow_up}

In this section we discuss the symplectic blowup construction along a symplectic submanifold, highlighting the main invariance properties we will need for the proof of Theorem~\ref{thm:main_thm}.
While the symplectic blowup at a point is well-studied (see e.g. \cite{mcduff1994symplectic} or \cite[\S7.1]{mcduff2017introduction}), the analogue for symplectic submanifolds is somewhat less explored. Here we roughly follow the outline given in  \cite[\S7.1]{mcduff2017introduction}. 

After reviewing the blowup construction along a symplectic ball embedding $B^{2n}(R) \hooksymp M^{2n}$, we extend this fiberwise to construct the symplectic blowup along an embedding $B^{2k}(R) \times \Sigma^{2n-2k} \hooksymp M^{2n}$.
The properties we will need of this construction are summarized in Proposition~\ref{prop:main_properties_of_blowup},
whose proof is provided in \S\ref{sec:detailed_blowup}.

\subsection{The symplectic blowup at a point}\label{subsec:pt_blowup}

In order to set up notation let us first briefly review the symplectic blowup at a point $p$ in a symplectic manifold $M^{2n}$. Recall that this depends on a size parameter $R \in \R_{>0}$ and a symplectic embedding $\iota: B^{2n}(R) \hooksymp M$.
Roughly speaking, we obtain a new symplectic manifold $\wt{M}$ by removing the interior of $\iota(B^{2n}(R))$ and collapsing its boundary along the fibers of the Hopf fibration to obtain an exceptional divisor $Z \subset \wt{M}$ diffeomorphic to $\CP^{n-1}$.
Here the blowup parameter $R$ translates into $Z$ having symplectic form $R \,\omega_{\op{FS}}$, where $\omega_{\op{FS}}$ denotes the Fubini--Study form on $\CP^{n-1}$ normalized so that a line has unit area.

More precisely, let $\wt{\C}^n$ denote the complex analytic blowup of $\C^n$ at the origin (i.e. the total space of $\mathcal{O}(-1) \ra \CP^{n-1}$), with its natural projection maps $\pi: \wt{\C}^n \ra \C^n$ and $\pr: \wt{\C}^n \ra \CP^{n-1}$.
The exceptional divisor is $Z := \pi^{-1}(0)$, and we sometimes denote this by $\CP^{n-1} \subset \wt{\C}^n$ when no confusion should arise.
Given $R \in \R_{>0}$, we equip $\wt{\C}^n$ with the symplectic form $\wt{\omega}_R := \pi^* \omega_\std + R\, \pr^*\omega_{\op{FS}}$, where $\omega_\std = \sum_{i=1}^n dx_i\wedge dy_i$ denotes the standard symplectic form on $\C^n$.
Let $\wt{B}^{2n}(R)$ denote the preimage of $B^{2n}(R)$ under the projection $\pi: \wt{\C}^n \ra \C^n$.
Following \cite[Lem. 7.1.11]{mcduff2017introduction}, there is a diffeomorphism $F_R: \wt{\C}^n \setminus Z \ra \C^n \setminus B^{2n}(R)$ which restricts to a symplectomorphism 
\begin{align}
(\wt{B}^{2n}(\eps) \setminus Z,\wt{\omega}_R) \cong (B^{2n}(R+\eps) \setminus B^{2n}(R),\omega_\std)
\end{align}
for each $\eps > 0$.

Now suppose that we have a symplectic embedding $\iota: B^{2n}(R) \hooksymp M$.
In this situation we define the symplectic blowup by first extending $\iota$ to a symplectic embedding $B^{2n}(R + \eps) \hooksymp M$ for some small $\eps > 0$ (c.f. \cite[Thm. 3.3.1]{mcduff2017introduction}), and then putting 

\begin{align}
\bl_{\iota} M := \left(M \setminus \iota(B^{2n}(R)) \sqcup \wt{B}^{2n}(\eps)\right) / \sim,
\end{align}
 where we identify each point in $\wt{B}^{2n}(\eps) \setminus Z$ with its image under $\iota \circ  F_R$.
One can check that, up to symplectomorphism, $\bl_{\iota} M$ is independent of the extension of $\iota$ from $B^{2n}(R)$ to $B^{2n}(R + \eps')$, and is also unchanged if we deform $\iota$ through symplectic embeddings $B^{2n}(R) \hooksymp M$ (see \cite[Thm. 7.1.23]{mcduff2017introduction}).

\subsection{Blowing up a symplectic submanifold with trivial normal bundle}
\label{sec:blow_up_trivial_normal_bundle}

We now generalize the above discussion to symplectic blowups along symplectic submanifolds as follows. 
Let $(M^{2n},\omega)$ be a symplectic manifold and $\Sigma^{2n-2k} \subset M$ a symplectic submanifold of codimension $2k$. We henceforth assume $k\geq 2$. 
Recall that the germ of the symplectic form near $\Sigma$ is determined up to symplectomorphism by the symplectic normal bundle over $\Sigma$ (see \cite[Thm. 3.4.10]{mcduff2017introduction}).
In particular, when the symplectic normal bundle is trivial there is a small neighborhood of $\Sigma$ which is symplectomorphic to $ B^{2k}(\delta) \times \Sigma $ for some $\delta \in \R_{>0}$ (here we equip $\Sigma$ with the induced symplectic form $\omega|_{\Sigma}$ and $B^{2k}(\delta)$ with $\omega_\std$).

\begin{definition}[see also \cite{McDuff_simply_connected}]\label{def:blow_up_R_simple} 
Let $M^{2n}$ and $\Sigma^{2n-2k}$ be closed symplectic manifolds, and let $\iota:  B^{2k}(R) \times \Sigma \hooksymp M$
be a symplectic embedding for some $R \in \R_{>0}$.
We define the symplectic blowup along $\iota$ of size $R$ by first extending $\iota$ to a symplectic embedding
$B^{2k}(R + \eps) \times \Sigma  \hooksymp M$ for some small $\eps > 0$, and then putting
\begin{align}\label{eq:blowup_def}
\bl_{\iota}(M) := \left(M \setminus \iota( B^{2k}(R) \times \Sigma) \sqcup  ( \wt{B}^{2k}(\eps)\times \Sigma) \right) / \sim,
\end{align}
where we identify each point in $ \wt{B}^{2k}(\eps)\times \Sigma$ with its image under $\iota \circ (F_R \times \id )$.
\end{definition}

Note that in Definition~\ref{def:blow_up_R_simple} the symplectic submanifold $\iota(\{0\} \times \Sigma ) \subset M$ necessarily has trivial symplectic normal bundle. 
We will refer to $\bl_{\iota}(M)$ as a {\em blowup along $\iota(\{0\}\times  \Sigma )$ of size $R$}.

The construction in Definition~\ref{def:blow_up_R_simple} also naturally generalizes to the case of pairwise disjoint embeddings $\iota_i:  B^{2k}(R_i) \times \Sigma \hooksymp M$ for $i =1,\dots,k$ and some $R_1,\dots,R_k \in \R_{>0}$ (say by blowing up along the embeddings one at a time), and in this case we denote the corresponding blowup by $\bl_{\{\iota_i\}}(M)$.

 Observe that there is a natural symplectomophism 
\begin{align*}
 \beta: \bl_\iota(M)\setminus ( \CP^{k-1}\times \Sigma) \rightarrow M\setminus \iota( B^{2k}(R)\times \Sigma),
\end{align*}
and this induces a homology-level isomorphism 
 \begin{align}\label{eq:bu_hom_iso_on_comp}
 \beta_*^{-1}: H_2(M \setminus \iota( B^{2k}(R))\times \Sigma) \ra H_2(\bl_\iota(M)\setminus ( \CP^{k-1}\times \Sigma)).
 \end{align}
We also have an inclusion-induced maps $H_2(M \setminus \iota( B^{2k}(R)\times \Sigma)) \ra H_2(M)$ and $H_2(\bl_\iota(M)\setminus ( \CP^{k-1}\times \Sigma)) \ra H_2(\bl_\iota(M))$. The first of these is an isomorphism provided that $k \geq 2$, in which case together with \eqref{eq:bu_hom_iso_on_comp} we get a map $H_2(M) \ra H_2(\bl_\iota(M))$ which we denote again by $\beta_*^{-1}$. This map is injective since it splits the projection map $H_2(\bl_\iota(M)) \ra H_2(M)$.

The following proposition lists the main properties we will need for the blowup construction.

\begin{proposition} \label{prop:main_properties_of_blowup}
Let $\bl_{\iota}(M)$ denote the symplectic blowup along an embedding $\iota:  B^{2k}(R) \times \Sigma^{2n-2k} \hooksymp M$ as in Definition~\ref{def:blow_up_R_simple}. 

\begin{enumerate}[label=(\Alph*)]
\item \label{item:homology_descr}
We have an isomorphism
\begin{equation} \label{eq:homology_of_blow_up}
H_2(\bl_{\iota}(M);\R) \cong H_2(M;\R) \oplus \mathbb{R} E,
\end{equation}
whose inverse restricts to $\beta_*^{-1}: H_2(M;\R) \ra H_2(\bl_\iota(M);\R)$ and sends $E$ to the homology class of $\CP^1 \times \{\pt\} \subset \CP^{k-1} \times \{\pt\}  \subset  \tilde{B}^{2k}(\epsilon) \times \Sigma \subset \bl_{\iota}(M)$.  

The symplectic form on $\bl_\iota(M)$ is characterized homologically by the property that the symplectic area of $\beta_*^{-1}(A)$ agrees with that of $A$ for any $A \in H_2(M;\R)$, and the symplectic area of $[\CP^1 \times \pt ]$ is $R$.

\item\label{item:bl_def_eq} Suppose $\iota_1:B^{2k}(R_1) \times \Sigma \hooksymp M$ and $\iota_2: B^{2k}(R_2) \times \Sigma \hooksymp M$ are two symplectic embeddings (for some $R_1,R_2 \in \R_{>0}$) such that $\iota_1(\{0\} \times \Sigma)$ and $\iota_2(\{0\} \times \Sigma)$ are isotopic through symplectic submanifolds in $M$. Then the corresponding blowups $(\bl_{\iota_1}(M),\omega_1)$ and $(\bl_{\iota_2}(M),\omega_2)$ are symplectically deformation equivalent in the following sense. For $i = 1,2$, let $(\beta_{i})_*^{-1}: H_2(M) \rightarrow H_2(\bl_{\iota_i}(M))$ denote the map on homology induced by $\beta_i$, and identify $H_2(\bl_{\iota_i} (M))$ with $H_2(M)\oplus \mathbb{R}E_i$ using \eqref{eq:homology_of_blow_up}.

Then there exists a diffeomorphism $\phi: \bl_{\iota_1}(M) \rightarrow \bl_{\iota_2}(M)$ such that:
\begin{enumerate}
\item $\phi_*(E_1) = E_2$;
\item $\phi_* \circ (\beta_1)_*^{-1} = (\beta_2)_{*}^{-1}$;
\item  $\phi^* \omega_2$ is connected to $\omega_1$ by a 1-parameter family of symplectic forms.
\end{enumerate}

\end{enumerate}
\end{proposition}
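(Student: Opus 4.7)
My plan is to address the two items in turn. For Part \ref{item:homology_descr}, I would decompose $\bl_\iota(M)$ as $U_1 \cup U_2$, where $U_1$ is a tubular neighborhood of the exceptional divisor $\CP^{k-1}\times\Sigma$ retracting onto it, and $U_2$ is the image under $\beta^{-1}$ of $M \setminus \iota(B^{2k}(R')\times\Sigma)$ for some $R' < R$. The overlap $U_1 \cap U_2$ retracts onto $S^{2k-1}\times\Sigma$. Since $k \geq 2$, Künneth gives $H_j(S^{2k-1}\times\Sigma) \cong H_j(\Sigma)$ for $j \in \{1,2\}$, and the codimension $\geq 4$ inclusion $M \setminus \iota(\{0\}\times\Sigma) \hookrightarrow M$ is a homology equivalence through degree $2$. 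Running Mayer--Vietoris in degree $2$, the only relation in the pushout of $H_2(U_1) = \R[\CP^1]\oplus H_2(\Sigma)$ and $H_2(U_2) \cong H_2(M)$ identifies the class of the fiber $\{\pt\}\times\Sigma$ in $U_1$ with its image in $H_2(M)$ under inclusion; the connecting homomorphism vanishes since $H_1(\Sigma) \to H_1(U_1) = H_1(\Sigma)$ is the identity. This yields the splitting \eqref{eq:homology_of_blow_up} with $E = [\CP^1\times\{\pt\}]$. For the area statement, any $A \in H_2(M;\R)$ admits a representative transverse to and thus (by codimension) disjoint from $\iota(\{0\}\times\Sigma)$, which can be further isotoped out of $\iota(B^{2k}(R)\times\Sigma)$; since $\beta$ is a symplectomorphism on the complement, $\beta_*^{-1}A$ has the same area as $A$. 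Finally, $[\CP^1\times\{\pt\}] \subset \wt{B}^{2k}(\eps)\times\Sigma$ lies in a fiber where $\wt{\omega}_R = \pi^*\omega_\std + R\,\pr^*\omega_{\op{FS}}$ restricts to $R\,\omega_{\op{FS}}$, giving area $R$.

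For Part \ref{item:bl_def_eq}, the plan is to build a smooth one-parameter family of blowups interpolating the two given ones and then invoke Ehresmann's fibration theorem. Given a symplectic isotopy $\{\Sigma_t\}_{t\in[0,1]}$ from $\iota_1(\{0\}\times\Sigma)$ to $\iota_2(\{0\}\times\Sigma)$, the symplectic isotopy extension theorem (a Moser trick based on the Weinstein neighborhood theorem) produces an ambient symplectic isotopy $\psi_t: M \to M$ with $\psi_0 = \id$ and $\psi_t(\Sigma_0) = \Sigma_t$. Replacing $\iota_1$ by $\psi_1\circ\iota_1$ alters $\bl_{\iota_1}(M)$ only by a symplectomorphism that already intertwines the data $(E_1,(\beta_1)_*^{-1})$, so I may assume $\iota_1(\{0\}\times\Sigma) = \iota_2(\{0\}\times\Sigma)$. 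The symplectic neighborhood theorem relative to this common core submanifold, whose symplectic normal bundle is trivial by hypothesis, produces a fiberwise symplectomorphism of $B^{2k}(\delta)\times\Sigma$ (for sufficiently small $\delta > 0$) identifying the germs of $\iota_1$ and $\iota_2$ along the core. Linearly interpolating from $R_1$ to $R_2$ in the size parameter, and choosing a smooth path of embeddings $\iota_t: B^{2k}(R_t+\eps)\times\Sigma \hooksymp M$ extending this germ-level identification, yields a smooth family of symplectic blowups connecting $(\bl_{\iota_1}(M),\omega_1)$ and $(\bl_{\iota_2}(M),\omega_2)$.

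The principal obstacle I anticipate is confirming that this family is a smooth fiber bundle over $[0,1]$, since Ehresmann's theorem then delivers a smooth trivialization and hence the required diffeomorphism $\phi$. The technical content is that each ingredient of \eqref{eq:blowup_def} depends smoothly on $t$: the embeddings $\iota_t$ extended to size $R_t+\eps$ via the parametric symplectic neighborhood theorem, the gluing maps $\iota_t \circ (F_{R_t}\times\id)$, and the restrictions of $\wt{\omega}_{R_t}$. Once a smooth trivialization $\phi : \bl_{\iota_1}(M) \to \bl_{\iota_2}(M)$ is in hand, the symplectic forms $\omega_t$ pull back to a $1$-parameter family on $\bl_{\iota_1}(M)$ joining $\omega_1$ to $\phi^*\omega_2$. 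The identities $\phi_*(E_1) = E_2$ and $\phi_*\circ(\beta_1)_*^{-1} = (\beta_2)_*^{-1}$ then follow from naturality, since both $E_t$ and the splitting $(\beta_t)_*^{-1}$ vary continuously in $t$ and are characterized by the respective conditions on areas and on behavior outside the exceptional divisor.
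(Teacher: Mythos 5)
Your treatment of Part~\ref{item:homology_descr} is essentially the same Mayer--Vietoris argument the paper invokes (via \cite[Prop. 2.4]{McDuff_simply_connected}), and the area computation is correct.

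For Part~\ref{item:bl_def_eq}, however, there is a genuine gap at the crucial step. After using symplectic isotopy extension to arrange $\iota_1(\{0\}\times\Sigma) = \iota_2(\{0\}\times\Sigma)$, you assert that the symplectic neighborhood theorem ``produces a fiberwise symplectomorphism \ldots\ identifying the germs of $\iota_1$ and $\iota_2$ along the core,'' and then you propose ``choosing a smooth path of embeddings $\iota_t: B^{2k}(R_t+\eps)\times\Sigma \hooksymp M$.'' But such a path need not exist. The two embeddings $\iota_1$ and $\iota_2$ induce two trivializations of the symplectic normal bundle of the common core $\Sigma_0 \subset M$, and these differ by a gauge transformation $\chi: \Sigma \to Sp(2k)$ which in general is \emph{not} nullhomotopic. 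When $\chi$ is not nullhomotopic, $\iota_1$ and $\iota_2$ are not joined by any path of symplectic embeddings of $B^{2k}(R)\times\Sigma$ (even after shrinking $R$), because any such path would induce a homotopy of the two trivializations. Nothing in the hypothesis --- which only asks that the core submanifolds $\iota_i(\{0\}\times\Sigma)$ be symplectically isotopic --- rules this out; the paper flags this issue explicitly by considering $\iota_\chi(a,b) := \iota(\chi(b)(a),b)$ for a non-nullhomotopic $\chi: \Sigma \to U(k)$, which has the same image as $\iota$ but is not isotopic to it rel the core.

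The missing ingredient is that the blowup construction is $U(k)$-equivariant. The paper's resolution is: (i) deform $\chi$ within $Sp(2k)$ to a map $\chi_1: \Sigma \to U(k)$ (possible since $U(k)$ is a deformation retract of $Sp(2k)$), which \emph{can} be joined by a path of symplectic embeddings using the Moser argument you describe; and then (ii) show \emph{directly}, by an explicit formula, that $\bl_\iota(M)$ and $\bl_{\iota_{\chi_1}}(M)$ are symplectomorphic even when $\chi_1$ is not nullhomotopic, using that $\omega_\std$ on $\C^k$ and $\wt\omega_R$ on $\wt\C^k$ are both $U(k)$-invariant and the gluing map $F_R$ is $U(k)$-equivariant. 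Step~(ii) is the essential point that your interpolation strategy cannot supply, because the two blowups in that step are \emph{not} connected by a smooth family of blowups of the form you construct. Your Ehresmann argument handles the cases covered by the paper's Propositions~\ref{prop:isotopies_give_equivalence_after_blowup} and~\ref{prop:straighten_fiber}, but silently skips the final equivariance lemma.
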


Part \ref{item:homology_descr} follows a Mayer-Vietoris sequence argument (see \cite[Prop. 2.4]{McDuff_simply_connected}).
Taking the linear dual to \eqref{eq:cohomology_of_blowup} gives
\begin{equation} \label{eq:cohomology_of_blowup}
H^2(\bl_{\iota}(M);\R) \cong H^2(M;\R) \oplus \mathbb{R} e,
\end{equation}
where $\langle e,E \rangle =1$ and $\langle e, A\rangle=0$ for any $A \in H_2(M;\mathbb{R})$.
Under this isomorphism, the symplectic form of $\bl_{\iota}(M;\R)$ lies in class $[\beta^*\omega] + Re$, where
$\langle [\beta^* \omega] ,A\rangle =\langle [\omega],A\rangle$ for any $A\in H_2(M;\mathbb{R})$ and $\langle [\beta^* \omega],E\rangle =0$.

To explain the significance of \ref{item:bl_def_eq}, we observe that the formula for $\bl_\iota(M)$ given in \eqref{eq:blowup_def} depends on the embedding $\iota: B^{2k}(R) \times \Sigma \hooksymp M$ and not just its image.  
Then given two embeddings $\iota_1,\iota_2: B^{2k}(R) \times \Sigma \hooksymp M$ such that $\iota_1(\{0\} \times \Sigma)$ and $\iota_2(\{0\} \times \Sigma)$ are isotopic as symplectic submanifolds in $M$, it is not a priori clear that $\bl_{\iota_1}(M)$ and $\bl_{\iota_2}(M)$ are symplectically deformation equivalent. 
More specifically, given a smooth map $\chi:\Sigma\rightarrow U(k)$ which is not homotopic to a constant map, we can consider the symplectic embedding $\iota_\chi :B^{2k}(R)\times \Sigma \hooksymp M$ defined by $\iota_\chi(a,b):=\iota(\chi(b)(a),b)$ for all $(a,b)\in B^{2k}(R)\times \Sigma$.
Then it is still not immediately clear that $\bl_\iota(M)$ and $\bl_{\iota_\chi}(M)$ are symplectically deformation equivalent, even though $\iota$ and $\iota_{\chi}$ have the same images as embeddings $B^{2k}(R) \times \Sigma \hooksymp M$.
Note that $\iota$ and $\iota_{\chi}$ induce trivializations of the symplectic normal bundle of $\iota(\{0\}\times\Sigma) = \iota_\chi(\{0\}\times\Sigma)$ in $M$ which are (by construction) not homotopic through such trivializations.

We take up the proof of \ref{item:bl_def_eq} in the next section. The idea is that given two symplectic embeddings $\iota_1$ and $\iota_2$, by appropriately shrinking the area parameter $R$, we can deform $\iota_1$ into $\iota_1'$ so that there is some $\chi:\Sigma \rightarrow U(k)$ so that $\iota_2 = \iota_{1\chi}'$. Then we show explicitly $\bl_{\iota_1'}(M)$ and $\bl_{\iota_{1\chi}'}(M)$ are symplectomorphic. The key idea in this step is that the standard symplectic form on $\mathbb{C}^k$ and $\tilde{\mathbb{C}}^k$ are $U(k)$ invariant, so that the identity map $\bl_{\iota_1'}(M)\setminus (\Sigma \times \CP^{k-1}) \rightarrow \bl_{\iota_{1\chi}'}(M) \setminus (\Sigma \times \CP^{k-1})$ can be extended to the entire blowup as a symplectomorphism. 

\begin{remark}
In Section 7.1 of \cite{mcdfuff_salamon_bible} it is explained how to take the symplectic blow up of a symplectic submanifold with nontrivial symplectic normal bundle (See also \cite{McDuff_simply_connected, Guillemin1989}). The conclusion of Proposition \ref{prop:main_properties_of_blowup} is still true for these more general blowups, however the proof of \ref{item:bl_def_eq} becomes more difficult to write down because one needs to keep track of all the choices made during the blowup construction. Since in this paper we only need the simplest case as described in Definition \ref{def:blow_up_R_simple}, we content ourselves to studying blowups in this simpler setting.
\end{remark}

\subsection{Deformation equivalence of the blowup} \label{sec:detailed_blowup}

In this section we prove Part \ref{item:bl_def_eq} of proposition \ref{prop:main_properties_of_blowup}.
We start with 
\begin{proposition} \label{prop:isotopies_give_equivalence_after_blowup}
Let $\iota_t: B^{2k}(R_t)\times \Sigma \hooksymp (M,\omega)$ for $t\in [1,2]$ denote a smooth 1-parameter family of symplectic embeddings, of radius $R_t$ (where $R_t$ also varies smoothly with $t$). Then the blowups $\bl_{\iota_1}(M)$ and $\bl_{\iota_2}(M)$ are deformation equivalent in the sense of Proposition \ref{prop:main_properties_of_blowup}.
\end{proposition}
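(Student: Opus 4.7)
I would prove this by a two-step argument: first by varying the radius (which only changes the cohomology class of the blowup symplectic form on a fixed underlying smooth manifold) and then, with the radius held fixed, by using parametric symplectic isotopy extension to realize the family of embeddings via an ambient symplectic isotopy which lifts to the blowups.

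\emph{Reducing to constant radius.} The underlying smooth manifold of the blowup of $M$ along a trivialized symplectic submanifold $\iota(\{0\} \times \Sigma)$ depends only on the submanifold and its framing, not on the size parameter $r$; the size enters only through the symplectic form, affecting its cohomology class by $r e$. Concretely, for a fixed embedding $\iota: B^{2k}(R_0) \times \Sigma \hooksymp M$ one can realize each $\bl_{\iota|_{B^{2k}(r) \times \Sigma}}(M)$ with $r \in (0,R_0)$ on a common smooth model with the symplectic form varying smoothly in $r$. Applied to each $\iota_t$ (extended a uniform amount past the boundary using compactness of $[1,2]$), this yields a symplectic deformation equivalence between $\bl_{\iota_t}(M)$ and $\bl_{\iota_t|_{B^{2k}(R)\times \Sigma}}(M)$ for any common $R < \min_t R_t$. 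So it suffices to prove the proposition when the radius is constant.

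\emph{Ambient isotopy extension and lift.} Assume now $R_t \equiv R$. I would invoke a parametric symplectic isotopy extension theorem for the compact source $B^{2k}(R) \times \Sigma$: the isotopy velocity defines a time-dependent symplectic vector field $X_t$ along $\iota_t(B^{2k}(R) \times \Sigma)$, and since $\iota_{X_t}\omega$ is closed on the image, it extends via retraction from a symplectic tubular neighborhood to a symplectic vector field on a neighborhood; a cut-off produces a compactly supported symplectic vector field on $M$ whose flow gives a smooth family of symplectomorphisms $\Phi_t: M \to M$ with $\Phi_1 = \id$ and $\Phi_t \circ \iota_1 = \iota_t$. By naturality of the blowup construction, each $\Phi_t$ lifts to a symplectomorphism $\wt{\Phi}_t: \bl_{\iota_1}(M) \to \bl_{\iota_t}(M)$ which is the identity on the glued-in $\wt{B}^{2k}(\eps) \times \Sigma$ piece and equals $\Phi_t$ on the complement, with overlap compatibility coming from $\iota_t = \Phi_t \circ \iota_1$ and the definition of the gluing map $F_R \times \id$. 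Since $\wt{\Phi}_t$ sends the exceptional divisor to the exceptional divisor, conditions (a) and (b) of Proposition~\ref{prop:main_properties_of_blowup}\ref{item:bl_def_eq} hold; condition (c) holds with the constant path since $\wt{\Phi}_t$ is itself a symplectomorphism. Composing with the radius-varying deformations from Step~1 yields the full conclusion.

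\emph{Main obstacle.} The primary technical point is making the parametric symplectic isotopy extension theorem work cleanly for the compact-with-boundary source $B^{2k}(R) \times \Sigma$, rather than for balls alone as the standard references tend to treat. The essential argument (Moser-style cut-off of a closed 1-form extension) is routine in spirit, but demands care in organizing the tubular neighborhood choices smoothly in $t$ and in ensuring compact support of the generating vector fields on $M$. The radius-reduction step is comparatively mechanical but requires setting up the identifications in Definition~\ref{def:blow_up_R_simple} so that smoothness in the radius parameter is transparent.
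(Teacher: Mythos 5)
Your plan diverges from the paper's proof at the ambient isotopy extension step, and there is a genuine gap there. You assert that "a cut-off produces a compactly supported symplectic vector field on $M$," but cutting off a symplectic vector field does not in general yield a symplectic vector field: if $Y_t$ satisfies $d(\iota_{Y_t}\omega)=0$ on a neighborhood and $\chi$ is a cutoff function, then $d(\chi\,\iota_{Y_t}\omega)=d\chi\wedge\iota_{Y_t}\omega$, which is nonzero wherever $d\chi\neq 0$ unless $\iota_{Y_t}\omega$ is exact there. For the blowup at a point this is harmless because $H^1(B^{2n};\R)=0$ forces $\iota_{Y_t}\omega$ to be exact and one instead cuts off a primitive (making the vector field Hamiltonian), and this is implicitly what the standard references do. Here, however, $H^1\bigl(B^{2k}(R)\times\Sigma;\R\bigr)\cong H^1(\Sigma;\R)$ can be nonzero, so the flux class $[\iota_{Y_t}\omega]\in H^1(\Sigma;\R)$ is a genuine obstruction to producing a compactly supported symplectic generator. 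Even when $M$ is closed and no cutoff is needed, you still must extend $\iota_{Y_t}\omega$ from a tubular neighborhood of $\iota_t(B^{2k}(R)\times\Sigma)$ to a closed $1$-form on all of $M$, which requires its class to lie in the image of $H^1(M;\R)\to H^1(\Sigma;\R)$; one has no control over this in general. So you cannot conclude the existence of ambient symplectomorphisms $\Phi_t$ with $\Phi_t\circ\iota_1=\iota_t$. This is a structural obstruction, not a matter of organizing tubular neighborhood choices more carefully, and it is exactly why the Proposition asserts deformation equivalence rather than symplectomorphism.

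The paper sidesteps the issue by never asking for an ambient symplectomorphism. It performs the blowup construction fiberwise over $[1,2]$: inside $M\times[1,2]$ it excises $\iota_t\bigl(B^{2k}(R_t)\times\Sigma\bigr)\times\{t\}$ and glues in $\wt{B}^{2k}(\eps)\times\Sigma\times\{t\}$ using $\bigl(\iota_t\circ(F_{R_t}\times\id),\id\bigr)$, producing a fiber bundle $X\to[1,2]$ carrying a fiberwise symplectic two-form. Parallel transport along any smooth connection gives diffeomorphisms (not symplectomorphisms) $\varphi_t:X_1\to X_t$, and $\varphi_t^*\omega_t$ is then the required one-parameter family of symplectic forms on the fixed smooth manifold $X_1=\bl_{\iota_1}(M)$; only a smooth family of symplectic embeddings is needed, and the varying radius $R_t$ is handled at the same time. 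If you wish to keep the shape of your own decomposition, a repair in the same spirit is to take an arbitrary ambient \emph{smooth} isotopy $\Psi_t$ with $\Psi_t\circ\iota_1=\iota_t$ (this always exists by the ordinary isotopy extension theorem), observe that $\iota_1:B^{2k}(R)\times\Sigma\hooksymp(M,\Psi_t^*\omega)$ is symplectic for each $t$ and that $\Psi_t$ induces a symplectomorphism $\bl_{\iota_1}(M,\Psi_t^*\omega)\cong\bl_{\iota_t}(M,\omega)$; the family $\bl_{\iota_1}(M,\Psi_t^*\omega)$ then gives the deformation on a fixed underlying manifold.
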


\begin{proof}
 We observe it suffices to construct a 1-parameter family of symplectic manifolds $(X_t,\omega_t)_{t\in[1,2]}$ so that $(X_1,\omega_1) = \bl_{\iota_1}(M)$ and $(X_2,\omega_2) = \bl_{\iota_2}(M)$.

To be precise, when we say a 1-parameter family of symplectic manifolds, we mean a fiber bundle $X$ over $t\in[1,2]$ equipped with a smooth two form $\omega$. We require the fiber $(X_t,\omega|_{X_t})$ at $t$ to be a symplectic manifold. After choosing a smooth fiber connection, i.e. a splitting of $TX$ into into vertical and horizontal subspaces, on this bundle, this induces a parallel transport map $\varphi_t: X_1\rightarrow X_t$. Thus we may consider instead a fixed smooth manifold $X_1$ but a 1-parameter family of symplectic forms $\varphi_t^*\omega_t$. This shows $(X_1,\omega_1)$ and $(X_2,\omega_2)$ are deformation equivalent in the sense of Proposition \ref{prop:main_properties_of_blowup}.

To do this, consider $M\times [1,2]_t$. We choose $\epsilon$ small enough so that for all $t\in [1,2]$ the map $\iota_t$ extends to a map $\iota_t: B^{2k}(R_t+\epsilon) \times \Sigma \hooksymp M$. We apply the construction in Definition $\ref{def:blow_up_R_simple}$ in families. To be precise we cut out $\iota_t( B^{2k}(R) \times \Sigma) \times [1,2] \subset M\times [1,2]$ and glue back in $\Sigma \times \wt{B}^{2k}(\eps) \times [1,2]$ using the map $(\iota_t \circ ( F_{R_t} \times \id), \id). $
\end{proof}

\begin{proposition} \label{prop:straighten_fiber}
Given two symplectic embeddings $\iota_1, \iota_2: B^{2k}(R)\times \Sigma \hooksymp (M,\omega)$  such that
     \[
     \iota_1|_{\{0\}\times \Sigma } = \iota_2|_{\{0\} \times \Sigma} \quad d\iota_1|_{\{0\}\times \Sigma } = d \iota_2|_{\{0\} \times \Sigma},
     \]
     then after shrinking $R$ appropriately, there exists a one parameter family of symplectic embeddings $\iota_t: B^{2k}(R)\times \Sigma \hooksymp (M,\omega)$ with $t\in [0,1]$ that starts at $\iota_1$ and ends at  $\iota_2$.
\end{proposition}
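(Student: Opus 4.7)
The plan is to reduce to a local model around the submanifold $N := \iota_1(\{0\}\times\Sigma) = \iota_2(\{0\}\times\Sigma)$ via the Weinstein symplectic neighborhood theorem, parametrize nearby symplectic embeddings by closed $1$-forms using the Weinstein Lagrangian neighborhood theorem, and then produce the isotopy by linear interpolation of these $1$-forms.

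First I would apply the symplectic neighborhood theorem to $N\subset M$. The hypothesis $d\iota_1|_{\{0\}\times\Sigma}=d\iota_2|_{\{0\}\times\Sigma}$ means that $\iota_1$ and $\iota_2$ induce the same symplectic trivialization of the normal bundle $\nu N$. Using this common trivialization, Weinstein's theorem yields a symplectomorphism from a neighborhood of $N$ in $M$ onto $(B^{2k}(\delta)\times\Sigma,\omega_\std+\omega|_\Sigma)$ for some $\delta>0$, carrying $N$ to $\{0\}\times\Sigma$. After shrinking $R$ so that both images $\iota_i(B^{2k}(R)\times\Sigma)$ sit inside this neighborhood, I may assume $M=B^{2k}(\delta)\times\Sigma$, and both $\iota_i$ agree to first order along $\{0\}\times\Sigma$ with the standard inclusion $B^{2k}(R)\times\Sigma\hookrightarrow B^{2k}(\delta)\times\Sigma$.

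Next, for any symplectic embedding $\iota\colon B^{2k}(R)\times\Sigma\hooksymp B^{2k}(\delta)\times\Sigma$, the graph $\Gamma_\iota=\{(x,\iota(x))\}$ is a Lagrangian submanifold of the product $(B^{2k}(R)\times\Sigma)\times(B^{2k}(\delta)\times\Sigma)$ equipped with the symplectic form that is positive on the first factor and negative on the second, and $\Gamma_\iota$ lies near the diagonal $\Delta$. By the Weinstein Lagrangian neighborhood theorem, a neighborhood of $\Delta$ is symplectomorphic to a neighborhood of the zero section in $T^*(B^{2k}(R)\times\Sigma)$, and nearby Lagrangians correspond to graphs of closed $1$-forms. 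This encodes $\iota_1,\iota_2$ by closed $1$-forms $\alpha_1,\alpha_2$; the hypotheses translate into the vanishing of each $\alpha_i$ together with its $1$-jet along $\{0\}\times\Sigma$, so $\alpha_i=O(|v|^2)$ uniformly in $\sigma\in\Sigma$.

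Finally I would shrink $R$ once more so that $\alpha_1,\alpha_2$ are uniformly $C^1$-small on $B^{2k}(R)\times\Sigma$, and form the straight-line interpolation $\alpha_t:=(1-t)\alpha_1+t\alpha_2$. Each $\alpha_t$ is closed, $O(|v|^2)$, and small, so via Weinstein it corresponds to a Lagrangian that is the graph of a symplectic embedding $\iota_t\colon B^{2k}(R)\times\Sigma\hooksymp B^{2k}(\delta)\times\Sigma$, giving the desired isotopy from $\iota_1$ to $\iota_2$. The main obstacle will be bookkeeping the shrinking parameter: $R$ must be chosen so that simultaneously the images $\iota_i(B^{2k}(R)\times\Sigma)$ sit in the symplectic neighborhood of $N$ in $M$, every graph $\Gamma_{\iota_t}$ sits in the Weinstein neighborhood of $\Delta$, and each $\alpha_t$ is $C^1$-small enough to define a genuine embedding rather than just a Lagrangian section; the quadratic vanishing of the $\alpha_i$ along $\{0\}\times\Sigma$ makes all three conditions automatic once $R$ is sufficiently small.
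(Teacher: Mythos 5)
Your proposal is correct, but takes a genuinely different route from the paper. The paper's argument is a direct Moser-type argument: it first chooses a smooth (not necessarily symplectic) isotopy $j_t$ from $\iota_1$ to $\iota_2$ that agrees with $\iota_1$ to first order along $\{0\}\times\Sigma$, applies the relative Poincar\'e lemma to write $j_t^*\omega-(\omega_\std+\omega|_\Sigma)=d\alpha_t$ with $\alpha_t$ vanishing along $\{0\}\times\Sigma$, and then runs a two-parameter Moser isotopy (after shrinking $R$) to correct $j_t$ to a family of honest symplectic embeddings. Your approach instead passes to the Lagrangian graph picture: using the Weinstein Lagrangian neighborhood theorem for the diagonal $\Delta$ in $(B^{2k}(R)\times\Sigma)\times\overline{(B^{2k}(\delta)\times\Sigma)}$, you encode $\iota_1,\iota_2$ as closed $1$-forms $\alpha_1,\alpha_2$ that vanish to second order along $\{0\}\times\Sigma$, and then linearly interpolate. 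The two proofs are of comparable difficulty -- both rest on Moser's trick, yours via Weinstein's theorem and the paper's directly -- but they trade different burdens. The paper must exhibit the initial smooth isotopy $j_t$ (it does not spell this out, but it is standard given first-order agreement and small $R$), while yours avoids that step entirely since linear interpolation of the $\alpha_i$ produces the isotopy for free. In exchange, your approach requires care with the Weinstein chart for the non-closed Lagrangian $\Delta\cong B^{2k}(R)\times\Sigma$ (a manifold with boundary, which is easily handled by working on a slightly larger ball) and with the bookkeeping you already flag. One point worth making explicit in a final write-up: since the Weinstein chart need not respect the product fibration, the passage between ``graph of $\iota$ over the first factor'' and ``graph of a $1$-form over the zero section'' is only valid in the $C^1$-small regime, which is exactly what the quadratic vanishing plus shrinking of $R$ provides, so this is a genuine but surmountable subtlety rather than a gap.
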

\begin{proof}
 The proof is modelled after proof of Theorem 3.3.1 in \cite{mcduff2017introduction}.
    First choose a 1-parameter family of smooth embeddings $j_t, t\in [0,1]$  connecting $\iota_1$ and $\iota_2$, so that
    \[
    j_t|_{\{0\}\times \Sigma } = \iota_1|_{\{0\}\times \Sigma }, \quad dj_{t}|_{\{0\}\times \Sigma } = d\iota_1|_{\{0\}\times \Sigma }
    \]
    Then by the relative Poincar\'e lemma, there exists an one parameter family of one forms $\alpha_t$ so that
    \[
    \alpha_t|_{0\times \Sigma} =0, \quad d\alpha_t = j_t^*\omega - (\omega_{std} + \omega|_\Sigma), \quad \alpha_0=\alpha_1 =0
    \]
    Choose $\epsilon$ small enough so that $\omega_{std} + \omega|_\Sigma + sd\alpha_t$ is nondegenerate on $B^{2k}(\epsilon) \times \Sigma$. Then after shrinking $R$ to be small enough,
    it follows from Moser isotopy there exists a 2-parameter family of embeddings
    \[
    \chi_{s,t} : B^{2k}(R) \times \Sigma \rightarrow B^{2k}(\epsilon)\times \Sigma
    \]
    so that \[
    \chi^*_{s,t}(\omega_{std} + \omega|_\Sigma + sd\alpha_t) = \omega_{std} + \omega|_\Sigma, \quad \chi_{s,t}|_{0\times \Sigma} = \id, \quad \chi_{0,t} = \chi_{s,0} = \chi_{s,1} = \id.
    \]
    Our desired family of symplectic embeddings is then $j_t \circ \chi_{1,t}: B^{2k}(R) \times \Sigma \hooksymp M$.
\end{proof}

Given two symplectic embeddings  $\iota_1, \iota_2: B^{2k}(R)\times \Sigma \hooksymp (M,\omega) $ that agree on $\{0\} \times \Sigma$, we now proceed to show $\bl_{\iota_1}(M)$ and $\bl_{\iota_2}(M)$ are deformation equivalent. Once we have this, Part \ref{item:bl_def_eq} of Proposition \ref{prop:main_properties_of_blowup} follow directly from Propositions \ref{prop:straighten_fiber} and \ref{prop:isotopies_give_equivalence_after_blowup}.

Observe the element $d\iota_1 ^{-1}\circ d\iota_2|_{\{0\}\times\Sigma}$ can be naturally viewed as a map $\chi_2: \Sigma \rightarrow Sp(2k)$. Take $R'$ sufficiently small. Consider the symplectic embedding $\iota_{1,\chi_2}:B^{2k}(R') \times \Sigma \hooksymp M$ defined to be 
\[\iota_{1,\chi_2}(a,b) := \iota_1(\chi_2(b)(a),b)\]
for $(a,b) \in B^{2k}(R') \times \Sigma$. Here $R'$ needs to be chosen small enough so that $\chi_2(b)(a) \in B^{2k}(R)$. Then $d\iota_{1,\chi_2}|_{\{0\}\times \Sigma} = d\iota_2|_{\{0\}\times \Sigma}$, hence Propositions \ref{prop:straighten_fiber} and \ref{prop:isotopies_give_equivalence_after_blowup} combine to show $\bl_{\iota_{1,\chi_2}}(M)$ is deformation equivalent to $\bl_{\iota_2}(M)$. On the other hand, there exists a path of $\chi_t:\Sigma \rightarrow Sp(k)$ for $t\in [1,2]$ starting at $\chi_1$ and ending at $\chi_2$ so that $\chi_1$ takes $\Sigma \rightarrow U(k)$. Hence by potentially shrinking $R'$ again we have a one parameter family of symplectic embeddings given by
\[
\iota_{1,\chi_t}: B^{2k}(R') \times \Sigma \hooksymp M, \quad \iota_{1,\chi_t}(a,b): = \iota_1(\chi_t(b)a,b).
\]
Using this 1-parameter family of symplectic embeddings, Proposition \ref{prop:isotopies_give_equivalence_after_blowup} tells us $\bl_{\iota_{1,\chi_2}}(M)$ is deformation equivalent to $\bl_{\iota_{1,\chi_1}}(M)$. The proof of proposition \ref{prop:main_properties_of_blowup} is then concluded by the following proposition.

\begin{proposition}

The symplectic manifolds $\bl_{\iota_1}(M)$ and $\bl_{\iota_{1,\chi_1}}(M)$ are symplectormorphic. Here $\iota_{1,\chi_1}: = \iota_1(\chi_1(b)a,b)$, where $\chi_1: \Sigma \rightarrow U(k)$.
\end{proposition}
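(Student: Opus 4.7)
The plan is to exploit the $U(k)$-invariance of the standard symplectic structures on $\mathbb{C}^k$ and on its symplectic blowup $\wt{\mathbb{C}}^k$ to construct an explicit symplectomorphism $\Psi\colon \bl_{\iota_1}(M) \to \bl_{\iota_{1,\chi_1}}(M)$ which is the identity outside a neighborhood of the exceptional divisor and ``twists'' by $\chi_1$ near it.

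First, I would record two equivariance facts. (i) The $U(k)$-action on $\mathbb{C}^k$ lifts canonically to a smooth $U(k)$-action on the blowup $\wt{\mathbb{C}}^k$, commuting with both $\pi$ and $\pr$. Since $U(k)$ preserves $\omega_{\std}$ on $\mathbb{C}^k$ and $\omega_{\op{FS}}$ on $\CP^{k-1}$, it therefore preserves $\wt{\omega}_R = \pi^*\omega_{\std} + R\,\pr^*\omega_{\op{FS}}$. (ii) The diffeomorphism $F_R\colon \wt{\mathbb{C}}^k \setminus Z \to \mathbb{C}^k \setminus B^{2k}(R)$ of \cite[Lem.~7.1.11]{mcduff2017introduction} is $U(k)$-equivariant; this follows because $F_R$ is built out of $U(k)$-invariant data (in effect a radial formula involving the distance to the origin and the Hopf map).

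Second, I would observe that the two embeddings have the same image: since $\chi_1(b)\in U(k)$ preserves $B^{2k}(R)$ fiberwise, $\iota_{1,\chi_1}(B^{2k}(R)\times\Sigma)=\iota_1(B^{2k}(R)\times\Sigma)\subset M$. Hence the ``outside'' pieces $M\setminus \iota_1(B^{2k}(R)\times\Sigma)$ used in the two blowup constructions are literally equal as symplectic manifolds. On the ``inside'' piece $\wt{B}^{2k}(\eps)\times\Sigma$ (which is the same abstract symplectic manifold in both constructions), I define
\[
\Phi\colon \wt{B}^{2k}(\eps)\times\Sigma \longrightarrow \wt{B}^{2k}(\eps)\times\Sigma, \qquad \Phi(\wt{a},b) := (\chi_1(b)^{-1}\cdot\wt{a},\,b).
\]
This is a diffeomorphism, and by fact (i) above it is a symplectomorphism of $(\wt{B}^{2k}(\eps)\times\Sigma,\,\wt{\omega}_R\oplus\omega|_\Sigma)$ (the $\chi_1(b)^{-1}$ factor preserves $\wt\omega_R$ pointwise in $b$, and $\Phi$ preserves the base coordinate).

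Third, I would define $\Psi\colon \bl_{\iota_1}(M)\to\bl_{\iota_{1,\chi_1}}(M)$ to be the identity on the outside region and to be $\Phi$ on the inside region. Well-definedness on the overlap is the key compatibility check: a point in the overlap is represented by $(\wt{a},b)\in(\wt{B}^{2k}(\eps)\setminus Z)\times\Sigma$, glued to $\iota_1(F_R(\wt{a}),b)\in M$ in $\bl_{\iota_1}(M)$. Applying $\Psi$, I need its image to be the same on both descriptions in $\bl_{\iota_{1,\chi_1}}(M)$, i.e.\
\[
\iota_{1,\chi_1}\bigl(F_R(\chi_1(b)^{-1}\wt{a}),\,b\bigr) \;=\; \iota_1(F_R(\wt{a}),\,b).
\]
By the definition of $\iota_{1,\chi_1}$ the left side equals $\iota_1(\chi_1(b)\cdot F_R(\chi_1(b)^{-1}\wt{a}),\,b)$, which by fact (ii) equals $\iota_1(F_R(\wt{a}),b)$ as required. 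Since both pieces of $\Psi$ are symplectomorphisms and they agree on the overlap, $\Psi$ is a globally defined symplectomorphism.

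The main obstacle is item (ii), the $U(k)$-equivariance of the gluing diffeomorphism $F_R$. This is not difficult once one writes down the explicit formula of \cite[\S 7.1]{mcduff2017introduction}, but it is the place where one uses that the blowup construction was set up in a rotationally symmetric way; without this, the twist $\chi_1$ would not patch cleanly across $\partial\wt{B}^{2k}(\eps)$.
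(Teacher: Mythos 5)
Your proposal is correct and takes essentially the same approach as the paper: it builds the symplectomorphism by patching the identity on the outside piece with a fiberwise $U(k)$-twist on the inside piece, and uses the $U(k)$-equivariance of $F_R$ to verify compatibility on the overlap. The only cosmetic difference is that you construct the map $\bl_{\iota_1}(M)\to\bl_{\iota_{1,\chi_1}}(M)$ (so you twist by $\chi_1(b)^{-1}$), whereas the paper constructs the inverse map using $\chi_1(b)$.
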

\begin{proof}
We first recall elements $g\in U(k)$ act symplectomorphically on $\mathbb{C}^k$ by $z \rightarrow gz \in \mathbb{C}^k$. There is a natural lifted action of $U(k)$ on $(\tilde{\mathbb{C}}^k, \wt{\omega}_R)$ also by symplectomorphisms. To see this, we recall we may write elements of $\tilde{\mathbb{C}}^k$ as pairs $([l],z)$, where $[l]$ is a line through the origin and $z$ is a point on $[l]$. Then $g \cdot ([l],z)= ([gl],gz)$. 

Given $\chi_1: \Sigma \rightarrow U(k)$, the map 
\[f_{\chi_1}:B^{2k}(R+\epsilon) \times \Sigma \rightarrow B^{2k}(R+\epsilon) \times \Sigma, \quad f_{\chi_1} (a,b) := (\chi(b)(a),b)\]
is a symplectomorphism. Likewise, the lifted map \[\wt{f}_{\chi_1}: \tilde{B}^{2k}(\epsilon) \times \Sigma \rightarrow  \wt{B}^{2k} \times \Sigma,\quad
\wt{f}_{\chi_1} (([l],z), b): = \big(([\chi_1(b)l],\chi_1(b)z),b \big).
\]
is also a symplectomorphism.

The map $(F_R, Id): \wt{B}^{2k}(\epsilon)\setminus Z \times \Sigma \rightarrow B^{2k}(R+\epsilon)\setminus B^{2k}(R) \times \Sigma$ is an $U(k)$ equivariant symplectomorphism. In particular
\begin{equation} \label{eq:equivariance}
f_{\chi_1} \circ (F_R, Id) = (F_R, Id) \circ \wt{f}_{\chi_1}
\end{equation}
with domain of the above equation being $\wt{B}^{2k}(\epsilon)\setminus Z \times \Sigma$.

We now recall that $\bl_{\iota_1}(M)$ is constructed by taking 

\begin{align*}
\bl_{\iota_1}(M) := \left(M \setminus \iota_1( B^{2k}(R)\times \Sigma) \sqcup  ( \wt{B}^{2k}(\eps)\times \Sigma) \right) / \sim,
\end{align*}
where the equivalence relation is given by taking $\wt{B}^{2k}(\eps)\setminus Z \times \Sigma$ and gluing it to $M$ via $\iota_1 \circ ( F_R \times \id)$.

Similarly $\bl_{\iota_{1,\chi_1}}(M)$ is constructed by taking the disjoint union 
\begin{align*}
\bl_{\iota_{1,\chi_1}}(M) := \left(M \setminus \iota_{1,\chi_1}( B^{2k}(R)\times \Sigma) \sqcup  (\wt{B}^{2k}(\eps)\times \Sigma) \right) / \sim,
\end{align*}
where the gluing is done by taking $\Sigma \times\wt{B}^{2k}(\eps)\setminus Z$ and gluing it to $M$ via $\iota_{1} \circ f_{\chi_1} \circ ( F_R \times \id)$. 

Then the map from $\bl_{\iota_{\chi_1}}(M)$ to $ \bl_{\iota_1}(M)$ is given by the identity from  $M\setminus \iota_{1,\chi_1}( B^{2k}(R)\times \Sigma) $ to $M \setminus \iota_{1}( B^{2k}(R)\times \Sigma)$. And it is given by $(\wt{f}_{\chi_1},\id)$ from $(\wt{B}^{2k}(\eps) \times \Sigma ) \subset \bl_{\iota_{\chi_1}}(M) $ to $(\wt{B}^{2k}(\eps) \times \Sigma) \subset \bl_{\iota_{1}}(M)$. The map is a symplectomorphism over each of the two glued pieces, and is well-defined by \eqref{eq:equivariance}.

\end{proof}

\section{Stabilizing $J$-holomorphic curves} \label{sec:holomorphic_curves}

    Let $(M,\omega_M)$ be a closed symplectic manifold and $A \in H_2(M)$ an integral homology class. Let $\mathcal{M}_{0,k}(M,A;J)$ denote the space of genus $0$ $J$-holomorphic maps $u: \CP^1 \ra M$ lying in the homology class $A$ with $k$ marked points in the domain.
     Let $\tilde{\mathcal{M}}_{0,k}(M,A;J)$ denote the  moduli space of unparameterized curves given by quotienting by the action of $\op{PSL}(2,\C)$ via biholomorphic reparameterizations. 
     For a curve $v \in \mathcal{M}_{0,k}(M,A;J)$ we denote its image in $\tilde{\mathcal{M}}_{0,k}(M,A;J)$ by $[v]$.

    Let $(N,\omega_N)$ be another closed symplectic manifold. 
    Let $J_M$ and $J_N$ be compatible almost complex structures on $M$ and $N$ respectively, and consider the split almost complex structure $J_M \times J_N$ on $M \times N$. 
    Note that for any fixed $p \in N$ there is an inclusion map
\begin{align*}
\mathcal{M}_{0,k}(M,A;J_M) &\ra \mathcal{M}_{0,k}(M\times N,A\times [\pt];J_M\times J_N)\\v &\mapsto \hat{v}_p,
\end{align*}
where $\hat{v}_p(x) := (v(x),p)$ for any $x \in \CP^1$.
Since the almost complex structure $J_M \times J_N$ is split, every element of $\mathcal{M}_{0,k}(M\times N,A\times [\pt];J_M\times J_N)$ is necessarily constant in the second factor, i.e. of the form $\hat{v}_q$ for some $v \in \calM_{0,k}(M,A;J_M)$ and $q \in N$.

Specializing to the case $k=1$, evaluating at the marked point gives a well-defined smooth map
    % \[
    % (\ev_M, \ev_N): \mathcal{M}_{0,1}(M\times N,A\times [pt];J_M\times J_N) \rightarrow M \times N;
    % \] 
\[
(\tilde{\ev_M},\tilde{\ev_N}): \tilde{\mathcal{M}}_{0,1}(M\times N,A\times [\pt];J_M\times J_N) \rightarrow M\times N.\]

\begin{theorem} \label{thm:stab_hol_curve}

Assume that all elements of $\mathcal{M}_{0,0}(M,A;J_M)$ are somewhere injective and regular.
Assume also that the almost complex structure $J_N$ is integrable near a fixed point $p \in N$.
% We consider 
% $\tilde{\mathcal{M}}_{0,1}(M\times N,A\times [pt];J_M\times J_N)$.
Then the moduli space $\tilde{\mathcal{M}}_{0,1}(M\times N,A\times [\pt];J_M\times J_N)$ is regular and $p$ is a regular value of the evaluation map $\tilde{\ev_N}: \tilde{\mathcal{M}}_{0,1}(M\times N,A\times [\pt];J_M\times J_N) \ra N$. In particular, $\ev_N^{-1}(p)$ is a smooth submanifold of $\tilde{\mathcal{M}}_{0,1}(M\times N,A\times [\pt];J_M\times J_N)$.
\end{theorem}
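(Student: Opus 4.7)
The strategy is to exploit the fact that every $(J_M\times J_N)$-holomorphic sphere in the class $A\times[\pt]$ is forced to be constant in the $N$-direction. Indeed if $u = (u_M, u_N) \colon \CP^1 \to M\times N$ represents $A\times[\pt]$, then $u_N$ is $J_N$-holomorphic with energy $\int_{\CP^1} u_N^*\omega_N = \omega_N \cdot [\pt] = 0$, so $u_N \equiv q$ for some $q\in N$ and $u_M = v \in \mathcal{M}_{0,0}(M,A;J_M)$. Allowing the marked point to vary and dividing by the $\op{PSL}(2,\C)$ action, this gives a set-theoretic bijection
\[
\tilde{\mathcal{M}}_{0,1}\bigl(M\times N,\, A\times[\pt];\, J_M\times J_N\bigr) \;\longleftrightarrow\; \tilde{\mathcal{M}}_{0,1}(M,A;J_M)\times N,
\]
under which $\tilde{\ev}_N$ corresponds to projection onto the second factor.

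The main step is then to upgrade this bijection to a diffeomorphism by showing that the linearized Cauchy--Riemann operator $D_{\hat v_q}$ at each split map is surjective. Because $J_M\times J_N$ is a product, the pullback bundle splits as $\hat v_q^*T(M\times N) = v^*TM \oplus \underline{T_qN}$, and $D_{\hat v_q}$ decomposes accordingly as $D^M_v \oplus D^N_q$. The operator $D^M_v$ is the usual linearized $\bar\partial_{J_M}$ at $v$, which is surjective by the regularity hypothesis on $\mathcal{M}_{0,0}(M,A;J_M)$. For the second summand, the key point is that $du_N = 0$ at the constant map $u_N\equiv q$, so all correction terms in the linearization involving derivatives of $J_N$ vanish; thus $D^N_q$ reduces to the ordinary $\bar\partial$ operator on the trivial holomorphic bundle $\underline{(T_qN, J_N(q))} \to \CP^1$, which is surjective with kernel $T_qN$ by Dolbeault (equivalently by Riemann--Roch on $\mathcal{O}_{\CP^1}^{\oplus \dim_\C N}$).

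Granting this, the moduli space is a smooth manifold of the expected dimension, the identification above upgrades to a diffeomorphism, and under it $\tilde{\ev}_N$ becomes the projection $\tilde{\mathcal{M}}_{0,1}(M,A;J_M)\times N \to N$. This is plainly a submersion, so every $p\in N$ is a regular value and $\tilde{\ev}_N^{-1}(p) \cong \tilde{\mathcal{M}}_{0,1}(M,A;J_M)\times\{p\}$ is a smooth submanifold. The only step with any real content is the surjectivity of $D^N_q$ at a constant map, and that is precisely where the product form of the almost complex structure and the vanishing of $du_N$ combine to reduce the analysis to a trivial linear Dolbeault computation. The integrability hypothesis on $J_N$ near $p$ is not used in this linear argument itself, but is natural to assume for later applications of the theorem where one wishes to work in honest holomorphic coordinates near $p$.
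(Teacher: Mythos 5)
Your proof is correct and follows the same skeleton as the paper's: split the linearized operator as $D^M_v \oplus D^N_q$, use the regularity hypothesis for the $M$-factor, and verify surjectivity of the $N$-factor, then pass to the unparameterized picture. The one genuine difference is in how you handle $D^N_q$. The paper invokes the integrability of $J_N$ near $p$ to reduce to a direct sum of line-bundle $\bar\partial$ operators and then cites the positivity/regularity criterion of \cite[Lem.~3.3.1]{mcdfuff_salamon_bible}. You instead observe that because $u_N\equiv q$ is constant, $du_N=0$ kills every $\nabla J_N$ correction term in the linearization, so $D^N_q$ is literally the flat $\bar\partial$ on the trivial holomorphic bundle $\underline{(T_qN,J_N(q))}\cong\mathcal{O}_{\CP^1}^{\oplus n}$, whose cokernel vanishes and whose kernel is the constants $T_qN$. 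This is cleaner and, as you correctly remark, makes the integrability hypothesis on $J_N$ unnecessary for the transversality statement itself; it is a mild strengthening of the paper's theorem. (It also sidesteps a small inconsistency in the paper's wording, which assumes integrability near the fixed point $p$ but then reasons at an arbitrary $q\in N$.) The rest of your argument --- the energy computation forcing constancy in the $N$-direction, the identification $\tilde{\mathcal{M}}_{0,1}(M\times N,A\times[\pt])\cong\tilde{\mathcal{M}}_{0,1}(M,A)\times N$ with $\tilde{\ev}_N$ becoming projection, hence a submersion --- matches the paper's closing steps and is sound.
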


\begin{proof}
Let $U$ be a neighborhood of $p$ in $M$ in which $J_N$ is integrable. For $q \in N$, the pullback tangent bundle of $\hat{v}_q$ splits as $\hat{v}_q^* T(M\times N) = v^*TM\oplus \mathbb{C}^n$ (here $\dim N = 2n$). 
The linearized Cauchy Riemann operator for $\hat{v}_q$ takes the form
\[
D_{\hat{v}_q}: W^{1,\ell}(S^2,\hat{v}_q^*T(M\times N)) \rightarrow L^\ell(S^2,\Omega^{0,1}( \hat{v}_q^*(T(M\times N)))).
\]

Here $W^{1,\ell}(S^2,\hat{v}_q^*T(M\times N))$ denotes the Sobolev completion of the space of vector fields, for some $\ell>2$. 
Since the almost complex structure is split, the linearized operator also splits as $D_M\oplus D_N$, where
\[
D_M:W^{1,\ell}(S^2,\hat{v}_q^*TM )\rightarrow L^\ell(S^2,\Omega^{0,1}( \hat{v}_q^*(TM))),\quad D_N:W^{1,\ell}(S^2, \mathbb{C}^n )\rightarrow L^\ell(S^2,\Omega^{0,1}(\mathbb{C}^n)).
\]
In the above we have used $\mathbb{C}^n$ to denote the trivial $\mathbb{C}^n$ bundle over $S^2$.
$D_M$ is already surjective by assumption. For $D_N$, since $J_N$ is integrable, it suffices to consider its restriction $D_N:W^{1,\ell}(S^2, \mathbb{C} )\rightarrow L^\ell(S^2,\Omega^{0,1}( \mathbb{C}))$. The fact this operator is surjective follows directly from the regularity criterion for line bundles in \cite[Lem. 3.3.1]{mcdfuff_salamon_bible}.

There is a map from $T_qN \rightarrow \ker D_N$ corresponding to moving $q \in N$. This map is bijective by noting the index of $D_N$ is precisely $\textup{dim }N$. This tells us that after requiring the curve $\{\hat{v}_q\}_{q\in N}$ to pass through $M\times \{p\}$, the resulting moduli space is still regular; and $\hat{v}_p$ has the same Fredholm index as $v$. Since the second component of $\hat{v}_p$ is constant, the group of holomorphic reparameterizations only acts on the first factor $v$. After quotienting out by this group, we recover the analogous statements for the unparameterized moduli spaces.
\end{proof}

The proposition above can be recast in terms of a computation of Gromov-Witten invariants of the manifold $M\times N$. In particular it can be seen as an instance of the product formula of Gromov-Witten invariants, for example as explained in \cite{hirschi2022global}.

Let $GW_{g,k,A}^{M}:H_*(M)^{\otimes k} \rightarrow \mathbb{R}$ denote Gromov-Witten invariant counting genus $g$ $J$-holomorphic curves in $M$ in homology class $A$ with $k$ marked points.

\begin{corollary}\label{cor:GW}
Assume $M$, $N$ and $M\times N$ are all semi-positive. Assume all curves counted by $GW_{0,0,A}^{M}$ satisfy the conditions stated in Theorem \ref{thm:stab_hol_curve}. Let $\alpha \in H_*(M\times N)$ denote the homology class $[M]\times [\pt] \in H_*(M\times N)$. Then we have
\[
GW_{0,0,A}^{M} = GW_{0,1,A\times \pt}^{M\times N}(\alpha).
\]
 \qed 
\end{corollary}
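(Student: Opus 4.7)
The plan is to compute both sides using the split almost complex structure $J = J_M \times J_N$ on $M \times N$ appearing in Theorem \ref{thm:stab_hol_curve}, and to identify the resulting signed counts. Since $M$, $N$, and $M \times N$ are all semi-positive, the Gromov--Witten invariants are well-defined and do not depend on the choice of (regular) compatible almost complex structure, so it suffices to work with this particular $J$. The hypothesis on curves in $\mathcal{M}_{0,0}(M, A; J_M)$, together with Theorem \ref{thm:stab_hol_curve}, ensures that the relevant moduli space on the right-hand side is regular and that $p$ is a regular value of $\ev_N$, so the invariant $GW^{M\times N}_{0,1,A\times \pt}(\alpha)$ is computed by a signed count of the 0-dimensional intersection $\ev_N^{-1}(p)$ (after imposing the Poincar\'e dual constraint of $\alpha$).

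Next I would geometrically identify the moduli spaces. Any $J$-holomorphic sphere in $M \times N$ lying in the class $A \times [\pt]$ has constant projection to $N$: its symplectic area in the $N$-direction is zero, and a $J$-holomorphic curve in a split almost complex manifold with vanishing $N$-energy must be $N$-constant. Combined with the condition $\ev_N = p$ coming from the insertion $\alpha = [M] \times [\pt]$, this forces every such curve to be of the form $\hat v_p$ for a unique $v \in \mathcal{M}_{0,0}(M, A; J_M)$ (up to the choice of marked-point position on the domain, which is absorbed by the automorphisms of the stabilized curve as in the dimension count in the proof of Theorem \ref{thm:stab_hol_curve}). This gives a natural bijection between the unordered moduli underlying the two invariants.

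Finally, I would verify that signed contributions match. As in the proof of Theorem \ref{thm:stab_hol_curve}, the linearized Cauchy--Riemann operator at $\hat v_p$ splits as $D_M \oplus D_N$, and the induced orientation on the determinant line is the product orientation. The $D_N$ factor acts on the trivial $\mathbb{C}^m$ bundle over $\CP^1$; its kernel is canonically identified with $T_pN$ via infinitesimal translation of $p$, with its standard complex orientation, and the transversality condition $\ev_N = p$ cuts out exactly this kernel. Consequently the orientation on $\ev_N^{-1}(p)$ induced from the moduli space of $M \times N$ agrees with the orientation on $\tilde{\mathcal{M}}_{0,0}(M, A; J_M)$. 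This is precisely the specialization of the Gromov--Witten product formula (e.g.\ as in \cite{hirschi2022global}) to our setting, with the $N$-factor contributing a unit through the trivial constant-curve invariant at $p$.

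The main obstacle is the last step: carefully matching the orientation conventions. All the geometric content is supplied by Theorem \ref{thm:stab_hol_curve}, and the sign/conventions question is most cleanly handled by quoting the product formula, which turns the calculation into a routine bookkeeping exercise.
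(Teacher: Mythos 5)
Your proof sketch follows the route the paper intends: the corollary is stated with a bare \qed, pointing to Theorem~\ref{thm:stab_hol_curve} and the product formula of \cite{hirschi2022global}, and your argument supplies exactly the missing ingredients — work with the split $J = J_M \times J_N$, identify the curves via vanishing of the $N$-energy, invoke Theorem~\ref{thm:stab_hol_curve} for regularity and the regular-value property of $\ev_N$, and match orientations through the splitting $D_M \oplus D_N$.

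There is, however, a real gap in the step where you assert that $\ev_N^{-1}(p)$ is a ``0-dimensional intersection'' and that the marked-point freedom is ``absorbed by the automorphisms of the stabilized curve.'' For a somewhere-injective $v$ (which is the standing hypothesis), the product curve $\hat v_p$ is again somewhere injective and has \emph{trivial} automorphism group, so the marked-point position is not quotiented away: $\ev_N^{-1}(p)$ is the quotient $\{(\hat v_p, z_1): v\in\mathcal{M}_{0,0}(M,A;J_M),\ z_1\in\CP^1\}/\op{PSL}(2,\C)$, which is a $\CP^1$-bundle over $\tilde{\mathcal{M}}_{0,0}(M,A;J_M)$ and hence 2-real-dimensional when the latter is 0-dimensional. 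This is also what the dimension formula gives: $\op{vdim}\tilde{\mathcal{M}}_{0,1}(M\times N,A\times[\pt]) - \dim_\R N = \op{vdim}\tilde{\mathcal{M}}_{0,0}(M,A) + 2$, so constraining only by $[M]\times[\pt]$ leaves a 2-dimensional family, not a signed count. To obtain a genuine numerical identity one must cut down by two more real dimensions — for instance also constrain $\ev_M$ against a 2-cycle $\gamma\subset M$ with $A\cdot\gamma = 1$ and use the divisor axiom, or pass to a three-pointed formulation — before the product formula produces the claimed equality. (This dimension discrepancy is arguably present in the paper's own statement of the corollary, and it does not affect the way it is used in the proof of Theorem~\ref{thm:main_thm}, which only requires the nonvanishing of \emph{some} deformation-invariant Gromov--Witten count in the class $[C]\times[\pt]$; but your writeup, which explicitly asserts 0-dimensionality, should address it.)
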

\begin{remark}
We expect Theorem \ref{thm:stab_hol_curve} to admit an analogue in the setting of punctured pseudoholomorphic curves, with $M$ a completed symplectic cobordism and $N$ a closed symplectic manifold. After stabilization the punctured holomorphic curves will have punctures asymptotic to Morse-Bott families of Reeb orbits. In this setting one can use Wendl's automatic transversality for punctured curves \cite{wendl2010automatic} (take all punctures to be ``free'' along Morse-Bott families of Reeb orbits) in lieu of the regularity criterion from \cite{mcdfuff_salamon_bible}. 
\end{remark}

\begin{remark}
We observe that the genus zero assumption is essential in the above results. We expect higher genus curves will disappear after a generic perturbation because their indices are negative. To be precise, after stabilizing by a symplectic manifold of dimension $2n$, the Fredholm index of a curve changes by $n(2-2g)$.

The $g=1$ curve scenario is very curious; in the case of closed curves, suppose we start with a rigid $J$-holomorphic curve in $M$. After we stabilize with $N$ we do get an $\dim N$ dimensional family of non-transverse $J$-holomorphic curves, all of which are index zero. After we perturb we would expect a discrete family will survive the perturbation and become transverse curves. The number of such curves we expect to be counted by an obstruction bundle. We give a lower dimensional manifestation of this phenomenon below.
\end{remark}

\begin{example}
Let $M=(S^1 \times S^1, \omega)$ denote the two dimensional torus, with the standard almost complex structure. Then there exists exactly one genus one $J$-holomorphic curve $u: T^2\rightarrow M$, and this curve is transverse (assuming we can vary the domain complex structure). Now stabilize with a symplectic surface of genus $g$, which we write as $(\Sigma_g,\omega)$. 
Now consider $S^1\times S^1 \times \Sigma_g$. With the split almost complex structure we see a 2-dimensional family of non-transverse index zero holomorphic tori parametrized by $g\in \Sigma_g$. We now think the ambient space $S^1\times S^1 \times \Sigma_g$ as $S^1$ times the mapping torus of the identity map on $\Sigma_g$. Now perturb the mapping torus of the identity map to the mapping torus of a small non-degenerate Hamiltonian $H$ (with appropriate choice of almost complex structure).  The curves in the homology class $[T^2] \times [\pt]$ that survive are precisely those corresponding the degree 1 periodic orbits of $X_H$; we have nonzero such counts; and this count is given by Taubes' Gromov invariant, as explained in section 2.6 of \cite{bn}.
\end{example}

\section{h-principles} \label{sec:h_principle}

We now state the main h-principle result we will need to use.

\begin{proposition} \label{prop:h-principle}
     Let $(\Sigma_g(L),\omega_L)$ denote the closed symplectic surface of genus $g$ and area $L$. Consider the symplectic manifold $(\CP^2 \times \Sigma_g(L), \omega_{FS}+ \omega_L)$. Here $\omega_{FS}$ is the Fubini-Study form.
    
    Let $\iota_i: \Sigma_{g}(L)\hookrightarrow \CP^2 \times \Sigma_g(L)$ be a finite collection of disjoint symplectic embeddings, each in the homology class $[\pt] \times [\Sigma_g(L)]$. Then there exists a symplectic isotopy $\phi_t$ with $\phi_0= \id$ and $\phi_1\circ \iota_i (\Sigma_{g}(L)) = \{p_i\} \times \Sigma_g\subset \CP^2\times \Sigma_g(L)$ for a collection of disjoint points $\{p_i\} \in \CP^2$.

\end{proposition}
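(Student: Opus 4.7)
The plan is to reduce the statement to the h-principle for symplectic submanifolds of codimension four referenced earlier in the paper, which asserts that any formal symplectic isotopy between symplectic surfaces in a symplectic manifold of dimension at least six can be realized by a genuine symplectic isotopy. I would organize the argument in three stages: first, construct a smooth ambient isotopy carrying each $\iota_i(\Sigma_g(L))$ onto a fiber $\{p_i\}\times\Sigma_g(L)$; second, promote this to a \emph{formal} symplectic isotopy by interpolating the normal-bundle data; and third, invoke the h-principle and upgrade the resulting isotopy of submanifolds to an ambient symplectic isotopy $\phi_t$.

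For the first stage, each $\iota_i(\Sigma_g(L))$ represents the class $[\pt]\times[\Sigma_g(L)]\in H_2(\CP^2\times \Sigma_g(L))$, which has vanishing self-intersection. Consequently its symplectic normal bundle is a rank-two complex bundle over $\Sigma_g$ with $c_1=0$, hence trivial, exactly as for each fiber $\{p_i\}\times\Sigma_g(L)$. Because the ambient manifold is six-dimensional and the submanifolds have codimension four, general position leaves ample room to isotope. Concretely, I would first smoothly straighten each $\iota_i$ to the graph of a map $\Sigma_g(L)\to \CP^2$ (using a Weinstein tubular neighborhood of $\iota_i(\Sigma_g(L))$ and its triviality) and then contract that map to a constant $p_i$, performing the isotopies one at a time and using the freedom to choose the $p_i$ distinct so that the moving submanifolds remain pairwise disjoint throughout.

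For the second stage, along each smooth isotopy we must specify a continuous family of linear symplectic structures on the moving normal bundle interpolating between the two endpoint symplectic structures. Since both endpoint normal bundles are trivial symplectic $\mathbb{R}^4$-bundles and the space of linear symplectic structures on a fixed rank-four real bundle is connected (in fact contractible modulo the choice of a compatible complex structure), the required formal interpolation exists without obstruction, producing a formal symplectic isotopy in the precise sense of the h-principle.

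Stage three then yields a smooth one-parameter family of pairwise disjoint symplectic embeddings $\Sigma_g(L)\hookrightarrow \CP^2\times \Sigma_g(L)$ joining $\iota_i$ to the fiber inclusion $x\mapsto(p_i,x)$. The remaining task is to extend this isotopy of submanifolds to an ambient symplectic isotopy $\phi_t$ with $\phi_0=\id$. For this I would work on a smoothly varying family of Weinstein tubular neighborhoods of the moving submanifolds given by the symplectic neighborhood theorem, transport the motion to an ambient symplectic isotopy there, and then patch to the identity outside via a cutoff and Moser's trick. The main obstacle lies precisely in this extension step: one must shrink the tubular neighborhoods uniformly so that they remain pairwise disjoint and separated from the complement throughout the isotopy, and then check that Moser's interpolation can be carried out smoothly in $t$. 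These verifications are standard in the compact setting at hand.
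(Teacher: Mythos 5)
Your proposal follows essentially the same route as the paper's first proof: reduce to the h-principle for symplectic (codimension-four) submanifolds to obtain a one-parameter family of symplectic embeddings joining each $\iota_i$ to a fiber inclusion, then extend to an ambient symplectic isotopy via the symplectic isotopy extension theorem. Your stages one and three match the paper's use of general position (Remark~\ref{rmk:gen_pos}, giving that smoothly homotopic embeddings are isotopic) and the isotopy extension theorem of \cite{auroux_asymptotic}.

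The one place you are imprecise is stage two. The formal data required by the h-principle (Theorem~\ref{thm:h_princ_symp_sub} and Corollary~\ref{cor:pi_0_h_princ}) is a two-parameter family of bundle monomorphisms $F_{s,t}\colon TQ\to TM$ covering $f_t$, agreeing with $df_t$ at $s=0$ and at $t\in\{0,1\}$, and symplectic-form-preserving at $s=1$; this is \emph{not} the same as choosing a path of linear symplectic structures on the moving normal bundle, and the space you invoke (``linear symplectic structures on a fixed rank-four bundle'') is neither obviously the right one nor obviously sufficiently connected for the obstruction theory to close. The paper does this step carefully in Lemma~\ref{lem:formal_iso_stiefel} and Lemma~\ref{lem:frame_mfd_conn}, where the point is that the pair of frame manifolds $\bigl(\fr_{2k}(\R^{2n}),\fr^\omega_{2k}(\R^{2n})\bigr)$ is $(2n-2k-1)$-connected, hence $(2k+1)$-connected under the hypothesis $\dim M \ge 2\dim Q + 2$ — which here is $6\ge 6$, so the bound is tight and the precise connectivity estimate matters. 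You should replace the normal-bundle heuristic with this obstruction-theoretic argument (or cite Proposition~\ref{prop:formal} directly). Finally, the paper also records an alternative, hands-on proof in the genus-zero case using \cite[Lem.~A.5]{Buhovsky_Opshtein}; this is a genuinely different construction that sidesteps holonomic approximation, which your proposal does not touch on.
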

We first give a proof via an abstract h-principle in the style of \cite{eliashberg2002introduction}; we then use a direct construction using tools from \cite{Buhovsky_Opshtein} to give a proof for the case where $g=0$ and we replace $\CP^2 \times \Sigma_g(L)$ with $B^4(R) \times \Sigma_g(L)$.
\subsection{General h-principle proof}
Let us first recall the (parametric) h principle for symplectic embeddings between symplectic manifolds $(Q^{2k},\omega_Q)$  and $(M^{2n},\omega_M)$.\footnote{Here by ``symplectic embedding'' we mean an embedding $\iota: Q \hookrightarrow M$ such that $\iota^* \omega_M = \omega_Q$. This is called an ``isosymplectic embedding'' in \cite{eliashberg2002introduction}, which also considers the weaker notion of an embedding whose image is a symplectic submanifold.}
Given smooth manifolds $Q^{2k}$ and $M^{2n}$, a smooth map $F: TQ \ra TM$ is a {\em monomorphism} if 
\begin{itemize}
  \item there exists a smooth map $f: Q \ra M$ such that for any $p \in Q$ and $v \in T_pQ$ we have $F(v) \in T_{f(p)}M$
  \item $F|_{T_pQ}: T_pQ \ra T_{f(p)}M$ is an injective linear map for each $p \in Q$.
\end{itemize}
In particular, $F$ gives a vector bundle homomorphism $TQ \ra f^*TM$.
Following \cite{eliashberg2002introduction}, we use the notation $\bs F := f$.
If $Q$ and $M$ are further equipped with symplectic forms $\omega_Q$ and $\omega_M$ respectively, we will say that $F$ is {\em symplectic form preserving} if $F|_{T_pQ}: T_pQ \ra T_{f(p)}M$ pulls back the linear two-form $\omega_M|_{f(p)} \in \Lambda^2(T^*_{f(p)}M)$ to $\omega_Q|_p \in \Lambda^2(T_p^*Q)$ for each $p \in Q$.
\begin{definition}
A {\em formal symplectic embedding} from $Q$ into $M$ is a smooth embedding $f: Q \hookrightarrow M$ which satisfies the cohomological condition $f^*[\omega_M] = [\omega_Q] \in H_2(Q;\R)$, 
together with a smooth family of monomorphisms $F_t: TQ \ra TM$, $t \in [0,1]$, satisfying $\bs F_t = f$ and $F_0 = df$, and such that $F_1$ is symplectic form preserving. 
\end{definition}
\NI Let $\calS(Q,M)$ denote the space of all symplectic embeddings of $Q$ into $M$, and similarly let $\calSformal(Q,M)$ denote the space of all formal symplectic embeddings of $Q$ into $M$. Note that there is a natural inclusion map $\iota: \calS(Q,M) \hookrightarrow \calSformal(Q,M)$ which associates to a symplectic embedding $f: Q \hooksymp M$ the formal symplectic embedding with $F_t = df$ for all $t \in [0,1]$.

\begin{theorem} [{\cite{gromov2013partial}, see also \cite{eliashberg2002introduction}}]\label{thm:h_princ_symp_sub}
Let $(Q,\omega_Q)$ be a closed symplectic manifold and $(M,\omega_M)$ any symplectic manifold such that we have $\dim Q \leq \dim M - 4$.
Then the inclusion map $\iota: \calS(Q,M) \hookrightarrow \calSformal(Q,M)$ is a weak homotopy equivalence.
\end{theorem}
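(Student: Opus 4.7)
The plan is to prove this via Gromov's general machinery for partial differential relations, following the strategy laid out in the book of Eliashberg--Mishachev. The starting point is to reformulate the problem: a true symplectic embedding $f : Q \hooksymp M$ is the same as an embedding together with its $1$-jet $j^1 f$ which happens to be a symplectic-form-preserving monomorphism, whereas a formal symplectic embedding is an embedding plus an \emph{arbitrary} symplectic-form-preserving monomorphism covering it. So the task is to show that one can homotope the latter into the former, both existentially and parametrically. I would split this into (i) the purely algebraic problem of matching pointwise linear symplectic data, (ii) the holonomic approximation problem of replacing a bundle map by a true differential, and (iii) the Moser-type adjustment to pass from ``pointwise symplectic'' to ``isosymplectic.''

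For step (i), the inclusion of symplectic-form-preserving monomorphisms $TQ \to f^*TM$ into the space of all injective bundle maps has contractible fibers fiberwise (the space of linear symplectic injections $(\R^{2k},\omega_0) \hookrightarrow (\R^{2n},\omega_0)$ deformation retracts onto the corresponding Stiefel-type space), so the problem reduces to the following: starting from an embedding $f$ and a pointwise symplectic bundle monomorphism $F : TQ \to f^*TM$, produce a $C^0$-close embedding $\tilde f$ whose differential is homotopic to $F$ through pointwise symplectic monomorphisms, and such that $\tilde f^* \omega_M = \omega_Q$.

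Step (ii) is the technical heart, and it is where the codimension hypothesis $\dim M - \dim Q \geq 4$ is used in an essential way. The differential relation $\mathcal R \subset J^1(Q,M)$ of ``symplectic immersions'' is shown to be \emph{ample} in the sense of convex integration, i.e.\ at each $1$-jet the fiber over the principal subspace has the property that its connected components have convex hull equal to the entire fiber. The ampleness verification reduces to a linear-algebra lemma about the Grassmannian of symplectic $2k$-planes in $(\R^{2n},\omega_0)$, and it is precisely the $\geq 4$ codimension that provides enough room in this Grassmannian for the required convex-hull condition to hold. Gromov's convex integration theorem then produces a holonomic solution arbitrarily close (in $C^0$) to a given formal one, parametrically in all families; this yields the desired embedding $\tilde f$ with $\tilde f^* \omega_M$ pointwise symplectic and homotopic to $\omega_Q$ through pointwise symplectic forms on $Q$.

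Finally, in step (iii) I would run the Moser trick. The cohomological hypothesis in the definition of a formal symplectic embedding gives $[\tilde f^* \omega_M] = [\omega_Q] \in H^2(Q;\R)$, so on a tubular neighborhood of $\tilde f(Q) \subset M$ one can choose a primitive $\alpha$ with $d\alpha = \omega_Q - \tilde f^*\omega_M$ along $Q$; the linear interpolation $\omega_t = (1-t)\tilde f^*\omega_M + t\omega_Q$ is nondegenerate, and the Moser vector field $X_t$ defined by $\iota_{X_t}\omega_t = -\alpha$ integrates to an ambient diffeotopy, compactly supported in the tubular neighborhood, whose time-one map composed with $\tilde f$ is a genuine symplectic embedding. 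Carrying out steps (i)--(iii) with parameters in $S^k$ (using relative versions of each ingredient) gives surjectivity and injectivity of $\iota_* : \pi_k(\mathcal S(Q,M)) \to \pi_k(\mathcal S^{\mathrm{formal}}(Q,M))$ for every $k$, which is exactly the weak homotopy equivalence claim. The main obstacle, as noted, is the ampleness verification in step (ii); everything else is a fairly standard consequence of Moser's theorem and linear homotopy.
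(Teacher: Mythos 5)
The paper does not prove Theorem~\ref{thm:h_princ_symp_sub} at all: it is quoted as a black box from \cite{gromov2013partial} and \cite{eliashberg2002introduction}, and the paper explicitly notes (when contrasting with the Buhovsky--Opshtein argument) that the proof in those references is based on \emph{holonomic approximation} near a positive-codimension skeleton, combined with Moser-type arguments, with the codimension-$\geq 4$ hypothesis used to modify the embedding over the top-dimensional cells so as to absorb the remaining (exact) discrepancy of the pullback form. Your proposal instead routes the technical heart through convex integration, and this is where it breaks down.

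Concretely, your step (ii) asserts that the open relation of ``symplectic immersions'' (injective differential with nondegenerate pullback of $\omega_M$) is ample, with codimension $\geq 4$ supplying the convex-hull condition. This is not the case: fix a principal direction and let $v = F(\partial_{x_1})$ vary while the other images $w_2,\dots,w_{2k}$ are held fixed. The nondegeneracy of the pulled-back form is the nonvanishing of the Pfaffian of the matrix $\bigl(\omega_M(w_i,w_j)\bigr)$, and expanding the Pfaffian along its first row shows it is an affine-linear function of $v$. Generically its zero set is a hyperplane, so the fiber of the relation over the principal subspace is a union of two half-spaces, each of which is its own convex hull; the ampleness condition fails, no matter how large the codimension, so Gromov's convex integration theorem does not apply to this relation. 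Moreover, even granting an embedding $\tilde f$ with $\tilde f^*\omega_M$ nondegenerate and $[\tilde f^*\omega_M]=[\omega_Q]$, your step (iii) is unjustified: the straight-line path $(1-t)\tilde f^*\omega_M + t\,\omega_Q$ between two cohomologous symplectic forms on $Q$ need not be nondegenerate when $\dim Q \geq 4$ (convexity fails for symplectic forms), and convex integration provides no $C^0$-control of $\tilde f^*\omega_M$ relative to $\omega_Q$ that would keep the interpolation symplectic. This is precisely why the actual argument needs holonomic approximation (which does give local $C^0$-closeness of the jet near a skeleton) together with a separate mechanism, exploiting the extra four codimensions and the cohomological hypothesis, to correct the form over the top cells; a naive ``pointwise symplectic plus Moser'' reduction does not suffice.
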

We will be primarily interested in the following special case, which corresponds to injectivity on $\pi_0$.
\begin{corollary}
\label{cor:pi_0_h_princ}
Let $(Q,\omega_Q)$ be a closed symplectic manifold and $(M,\omega_M)$ any symplectic manifold such that we have $\dim Q \leq \dim M - 4$. Let $f_0,f_1: Q \hooksymp M$ be symplectic embeddings which are joined by a smooth isotopy $f_t: Q \hookrightarrow M$, $t \in [0,1]$.
 Suppose further that we have a smooth family of bundle monomorphisms $F_{s,t}: TQ \ra TM$, $s,t \in [0,1]$ such that
 \begin{itemize}
   \item $\bs F_{s,t} = f_t$ for all $s,t \in [0,1]$
   \item $F_{s,i} = df_i$ for all $s \in [0,1]$ and $i = 0,1$
   \item $F_{0,t} = df_t$ for all $t \in [0,1]$
   \item $F_{1,t}$ is symplectic form preserving for all $t \in [0,1]$.
 \end{itemize}
Then $f_0$ and $f_1$ can be joined by a smooth family of symplectic embeddings.
\end{corollary}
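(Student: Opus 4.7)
The plan is to apply Theorem~\ref{thm:h_princ_symp_sub} directly, after repackaging the given data as a continuous path in $\calSformal(Q,M)$ connecting $\iota(f_0)$ and $\iota(f_1)$, and then invoking the induced bijection on $\pi_0$.

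First I would verify that for each $t \in [0,1]$ the pair $\Phi_t := (f_t, \{F_{s,t}\}_{s \in [0,1]})$ is a formal symplectic embedding in the sense defined above. The cohomological condition $f_t^*[\omega_M] = [\omega_Q]$ holds because $f_0$ is an honest symplectic embedding, so $f_0^*[\omega_M] = [\omega_Q]$, and the smooth isotopy $\{f_t\}$ ensures that all $f_t$ induce the same pullback on cohomology. The homotopy $\{F_{s,t}\}_s$ then starts at $F_{0,t} = df_t$ by the third bullet, ends at the symplectic form preserving monomorphism $F_{1,t}$ by the fourth bullet, and each $F_{s,t}$ covers $f_t$ by the first bullet. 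Hence $\Phi_t \in \calSformal(Q,M)$.

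Second I would check that the endpoints of this family lie in the image of $\iota$ and in fact equal $\iota(f_0)$ and $\iota(f_1)$. Recall that $\iota$ sends a symplectic embedding $f$ to the formal data with homotopy constantly equal to $df$. The hypothesis $F_{s,i} = df_i$ for all $s \in [0,1]$ and $i \in \{0,1\}$ is exactly the statement that $\Phi_0 = \iota(f_0)$ and $\Phi_1 = \iota(f_1)$. By smoothness of $F_{s,t}$ in both variables and of $f_t$ in $t$, the assignment $t \mapsto \Phi_t$ is a continuous path in $\calSformal(Q,M)$ joining these two endpoints.

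Finally, Theorem~\ref{thm:h_princ_symp_sub} asserts that $\iota \colon \calS(Q,M) \hookrightarrow \calSformal(Q,M)$ is a weak homotopy equivalence, and in particular induces a bijection on $\pi_0$. Since $\iota(f_0)$ and $\iota(f_1)$ lie in the same path component of $\calSformal(Q,M)$ via the path $\{\Phi_t\}$, it follows that $f_0$ and $f_1$ lie in the same path component of $\calS(Q,M)$, which yields the desired smooth family of symplectic embeddings connecting them. There is no substantive obstacle to overcome here: the content of the corollary is simply to extract the $\pi_0$ statement of the h-principle in a form convenient for the symplectic blowup applications later in the paper.
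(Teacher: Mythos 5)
Your proof is correct and is essentially the argument the paper has in mind: the paper states the corollary as the ``$\pi_0$ injectivity'' content of Theorem~\ref{thm:h_princ_symp_sub} without spelling out the details, and your repackaging of the hypotheses as a path $\{\Phi_t\}$ in $\calSformal(Q,M)$ followed by invoking the induced bijection on $\pi_0$ is exactly that argument made explicit. You also correctly identify the cohomological condition $f_t^*[\omega_M] = [\omega_Q]$ as automatic from the smooth isotopy, which the paper notes in the remark immediately following the corollary.
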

\NI Note that under the hypotheses of Corollary~\ref{cor:pi_0_h_princ} we automatically have $f_t^*[\omega_M] = [\omega_Q]$ for all $t \in [0,1]$.

\begin{proposition}\label{prop:formal}
Let $(Q,\omega_Q)$ be a closed symplectic manifold and $M$ any symplectic manifold, and assume that we have $\dim M \geq 2\dim Q+2$.
Let $f_0,f_1: Q \hooksymp M$ be symplectic embeddings which are homotopic as smooth maps. Then $f_0$ and $f_1$ can be joined by a smooth family of symplectic embeddings.
\end{proposition}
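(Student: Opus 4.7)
The plan is to verify the hypotheses of Corollary~\ref{cor:pi_0_h_princ}, which reduces the problem to producing a smooth isotopy $f_t$ of embeddings from $f_0$ to $f_1$ together with an appropriate family of bundle monomorphisms $F_{s,t}: TQ \to TM$ covering it. First I would upgrade the given smooth homotopy to a smooth isotopy through embeddings $f_t: Q \hookrightarrow M$, rel $t=0,1$. The hypothesis $\dim M \geq 2\dim Q + 2$ is precisely what general position requires here: the double-point locus in the space of smooth maps $Q \to M$ has codimension $\dim M - 2\dim Q \geq 2$, so a generic perturbation of a $1$-parameter homotopy (rel endpoints) yields an isotopy of embeddings. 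Along such an isotopy, $f_t^*[\omega_M] = [\omega_Q]$ holds automatically by continuity of the pull-back on cohomology.

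Next I would construct $F_{s,t}$ as a section of the bundle of linear monomorphisms $\mathrm{Mon}(\pi^*TQ, \hat f^*TM) \to Q \times [0,1]_s \times [0,1]_t$ (where $\hat f(p,t) := f_t(p)$), with prescribed boundary values $F_{0,t} = df_t$, $F_{s,0} = df_0$, $F_{s,1} = df_1$, and the additional requirement that the face $\{s=1\}$ lies in the subbundle $\mathrm{Symp}$ of symplectic form preserving monomorphisms. The key linear-algebraic input is that, for symplectic vector spaces $V^{2k}, W^{2n}$ with $2n \geq 4k+2$, both the real Stiefel-type space $\mathrm{Mon}(V,W)$ and the symplectic Stiefel space $\mathrm{Symp}(V,W) \cong \op{Sp}(2n)/\op{Sp}(2n-2k)$ are $(2k+1)$-connected; this is classical for the former, and for the latter follows from the long exact sequence of the fibration $\op{Sp}(2n-2k) \to \op{Sp}(2n) \to \mathrm{Symp}(V,W)$ together with the low-dimensional homotopy groups of the stable symplectic group. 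In particular, the pointwise inclusion $\mathrm{Symp} \hookrightarrow \mathrm{Mon}$ is $(2k+1)$-connected.

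Since the base $Q \times [0,1]^2$ has dimension $2k+2$ and the fibers of the inclusion $\mathrm{Symp} \hookrightarrow \mathrm{Mon}$ are $(2k+1)$-connected, standard obstruction theory produces a section $F_{s,t}$ extending the prescribed boundary data with the $\{s=1\}$-face landing in $\mathrm{Symp}$. The conclusion of the proposition then follows directly from Corollary~\ref{cor:pi_0_h_princ}. The main anticipated technical obstacle is establishing the connectivity estimate for $\mathrm{Symp}(V,W) \hookrightarrow \mathrm{Mon}(V,W)$ uniformly and arranging the obstruction-theoretic setup to accommodate all four boundary conditions simultaneously; the rest of the argument is essentially formal once these ingredients are in place.
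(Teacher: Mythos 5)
Your proof is correct and follows essentially the same route as the paper: first upgrade the smooth homotopy to a smooth isotopy via general position (using $\dim M \geq 2\dim Q + 2$), then use obstruction theory over $Q \times [0,1]^2$ to extend to a family of formal symplectic embeddings, reducing to the connectivity of the pair $(\mathrm{Mon},\mathrm{Symp})$, and finally invoke Corollary~\ref{cor:pi_0_h_princ}. The only cosmetic difference is that you establish the connectivity of the symplectic Stiefel manifold via the fibration $\op{Sp}(2n-2k)\to \op{Sp}(2n)\to\mathrm{Symp}(V,W)$ together with stable homotopy of the symplectic group, whereas the paper uses a direct inductive fiber sequence (peeling off one frame vector at a time); both are standard and give the same bound.
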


\begin{remark}\label{rmk:gen_pos}
Under the assumption $\dim M \geq 2\dim Q + 2$, general position considerations show that any smooth homotopy $f_t: Q \ra M$, $t \in [0,1]$, becomes a smooth isotopy after a small perturbation.
\end{remark}

Let $E$ denote the fiber bundle over $Q \times [0,1]^2$ whose fiber $E_{q,s,t}$ over $(q,s,t) \in Q \times [0,1]^2$ is 
the space $\hom(T_qQ,T_{f_t(q)}M)$ of linear maps from $T_qQ$ to $T_{f_t(q)}M$.
Similarly, let $E' \subset E$ denote the subbundle whose fiber $E'_{q,s,t}$ over $(q,s,t)$ is the space
$\hom_\omega(T_qQ,T_{f_t(q)}M) \subset \hom(T_qQ,T_{f_t(q)}M)$ of symplectic form preserving linear maps.
Note that $E_{q,s,t}$ is diffeomorphic to the space $\fr_{2k}(\R^{2n})$ of $(2k)$-frames in $\R^{2n}$, while $E'_{q,s,t}$ is diffeomorphic to the space $\fr_{2k}^\omega(\R^{2n})$ of symplectic $(2k)$-frames in $\R^{2n}$.
One can also check that $\fr_{2k}(\R^{2n})$ is homotopy equivalent to the real Stiefel manifold $O(2n)/O(2n-2k)$ and $\fr^\omega_{2k}(\R^{2n})$ is homotopy equivalent to the  complex Stiefel manifold $U(n)/U(n-k)$.

\begin{lemma}\label{lem:formal_iso_stiefel}
Let $(Q^{2k},\omega_Q)$ and $(M^{2n},\omega_M)$ be symplectic manifolds, and let 
$f_0,f_1: Q \hooksymp M$ be symplectic embeddings which are smoothly isotopic.
If the pair $(\fr_{2k}(\R^{2n}),\fr_{2k}^\omega(\R^{2n}))$ is $(2k+1)$-connected, then $f_0$ and $f_1$ can be joined by a smooth family of formal symplectic embeddings.
\end{lemma}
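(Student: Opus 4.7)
The strategy is to recast the construction of a formal symplectic isotopy as a relative obstruction problem for the fiber pair $(\fr_{2k}(\R^{2n}), \fr^{\omega}_{2k}(\R^{2n}))$. With the bundles $E \supset E'$ of Proposition~\ref{prop:formal} in hand, whose fibers over $(q, s, t)$ are modeled on $\fr_{2k}(\R^{2n})$ and $\fr^{\omega}_{2k}(\R^{2n})$ respectively, the data $F_{s,t}$ demanded by Corollary~\ref{cor:pi_0_h_princ} is exactly a section of $E$ over $Q \times [0,1]^2$ satisfying $F = df_t$ on $\{s = 0\}$, $F = df_i$ on $\{t = i\}$ for $i = 0, 1$, and $F|_{\{s = 1\}}$ taking values in $E'$. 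The canonical section $F(q, s, t) := df_t|_q$ (constant in $s$) already satisfies every one of these conditions except the last, since $df_t$ is not symplectic form preserving for $t \in (0,1)$ in general. It therefore suffices to homotope the canonical section, keeping the three stationary sides fixed, so that its restriction to the fourth side $\{s = 1\}$ takes values in $E'$.

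Reading the homotopy parameter as $s \in [0, 1]$, this reduces to the following relative deformation problem: deform the tautological section $df_t$ of $E|_{\{s=0\}}$ over $Q \times [0,1]_t$ --- which already lies in $E'$ over $Q \times \{0, 1\}$, since $f_0$ and $f_1$ are symplectic --- to a section of $E'$ through a homotopy stationary over $Q \times \{0, 1\}$. The successive obstructions lie in the local-coefficient cohomology groups
\begin{align*}
H^i\!\left(Q \times [0,1],\ Q \times \{0,1\};\ \pi_i\bigl(\fr_{2k}(\R^{2n}), \fr^{\omega}_{2k}(\R^{2n})\bigr)\right), \qquad 1 \leq i \leq 2k + 1,
\end{align*}
with local system determined by the monodromy of the pair $(E, E')$. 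By hypothesis every relative homotopy group in this range vanishes, so every obstruction class is identically zero and the desired homotopy of sections exists by the standard cellular extension argument. Packaging the homotopy back into the $s$-coordinate then provides $F_{s,t}$, and reading off the $u$-slices $u \mapsto (f_u, F_{\cdot, u})$ yields a smooth family of formal symplectic embeddings joining $f_0$ and $f_1$.

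I expect the main delicate point to be handling the local coefficient systems correctly, since the bundle $E$ need not be globally trivial over $Q \times [0,1]$ and the identification of its fibers with $\fr_{2k}(\R^{2n})$ depends on local choices of trivialization of $TQ$ and $f_t^* TM$. Once one notes that $(2k+1)$-connectedness is a fiberwise vanishing statement preserved under any change of trivialization, however, the coefficient systems in the relevant range are uniformly zero and the local-versus-trivial distinction becomes immaterial. Apart from this bookkeeping, the argument is a routine instance of the dictionary between high connectedness of a fiber pair and extendability/deformability of sections of the associated bundle pair.
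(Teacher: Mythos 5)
Your argument is correct and is essentially the same obstruction-theoretic argument the paper gives: you phrase it as vanishing of relative obstruction classes in $H^i(Q\times[0,1],\,Q\times\{0,1\};\pi_i(\fr_{2k}(\R^{2n}),\fr^\omega_{2k}(\R^{2n})))$, while the paper carries out the equivalent cell-by-cell extension over $Q\times[0,1]^2$ directly, using in each step that $\pi_{i+1}$ of the Stiefel pair vanishes for $i\le 2k$. The one small simplification you add---taking the constant-in-$s$ section $df_t$ as the starting point and then deforming it rel $Q\times\{0,1\}$ into $E'$---is a clean reduction but does not change the substance of the argument.
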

\begin{proof}
This follows by a standard obstruction theory argument. 
It suffices to find a section $\sigma: Q \times [0,1]^2 \ra E$ which agrees with $F$ over $Q \times [0,1] \times \{0,1\}$ and $Q \times \{0\} \times [0,1]$ and has image in $E'$ over $Q \times \{1\} \times [0,1]$.
Fixing a CW structure on $Q$, we construct $\sigma$ cell-by-cell.
Firstly, for each $0$-cell $C_0$ of $Q$ we pick compatible trivializations of $E$ and $E'$ over $C_0 \times [0,1]^2$. 
This identifies the restriction of $E$ to $C_0 \times [0,1]^2$ with $\fr_{2k}(\R^{2n}) \times [0,1]$, such that $E'|_{C_0 \times [0,1]^2}$ corresponds to $\fr_{2k}^\omega(\R^{2n}) \times [0,1]^2$.
Then pick any extension of $\sigma$ over $C_0 \times [0,1]^2 \cong [0,1]^2$, which exists since $\pi_1(\fr_{2k}(\R^{2n}),\fr_{2k}^\omega(\R^{2n})) = 0$ by assumption.

Now suppose by induction that $\sigma$ is already defined over all products of open $j$-cells in $Q$ with $[0,1]^2$ as $j$ ranges over $0,\dots,i-1$ for some $i \leq 2k$, and let $C$ be an open $i$-cell of $Q$. It suffices to extend $\sigma$ over $C \times [0,1]^2$.
By a deformation retract argument, we can assume that $\sigma$ is already defined in an open  neighborhood of $\bdy C \times [0,1]^2$. Let $C'$ be a slight shrinkening of $C$ which is diffeomorphic to a closed $i$-cell,
so that $\sigma$ is already defined near $\bdy C' \times [0,1]$ and $C' \times \{0\}$.
Then we can extend $\sigma$ over $C' \times [0,1]^2$ as long as $\pi_{i+1}(\fr_{2k}(\R^{2n}),\fr_{2k}^\omega(\R^{2n})) = 0$, which holds by assumption. 
\end{proof}

\begin{lemma}\label{lem:frame_mfd_conn}
The real frame manifold $\fr_{2k}(\R^{2n})$ is $(2n-2k-1)$-connected and the symplectic frame manifold $\fr_k(\C^n)$ is $(2n-2k)$-connected. In particular, the pair $(\fr_{2k}(\R^{2n}),\fr^\omega_{2k}(\R^{2n}))$ is $(2n-2k-1)$-connected.
\end{lemma}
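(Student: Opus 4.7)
The plan is to reduce both connectivity statements to the classical connectivity bounds for Stiefel manifolds, using the identifications $\fr_{2k}(\R^{2n}) \simeq V_{2k}(\R^{2n}) = O(2n)/O(2n-2k)$ and $\fr^\omega_{2k}(\R^{2n}) \simeq V_k(\C^n) = U(n)/U(n-k)$ already noted in the paper just above the lemma; these homotopy equivalences come from Gram--Schmidt in the real case and its Hermitian (equivalently symplectic) analogue in the complex case, where one uses an $\omega$-compatible almost complex structure to turn a symplectic $2k$-frame into a unitary $k$-frame.

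For the individual connectivities I would induct using the Stiefel fibrations obtained by recording the last vector of a frame:
\begin{equation*}
V_{m-1}(\R^{N-1}) \longrightarrow V_m(\R^N) \longrightarrow S^{N-1}, \qquad V_{k-1}(\C^{n-1}) \longrightarrow V_k(\C^n) \longrightarrow S^{2n-1}.
\end{equation*}
In the real case the base sphere is $(N-2)$-connected, and the base case $V_1(\R^N) = S^{N-1}$ is $(N-2)$-connected, matching the desired bound $(N-m-1)$ at $m=1$. The long exact sequence of the fibration propagates the bound: once $V_{m-1}(\R^{N-1})$ is $(N-m-1)$-connected, both flanking groups vanish in the appropriate range and hence $V_m(\R^N)$ is $(N-m-1)$-connected. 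Specializing to $N=2n$ and $m=2k$ gives the first claim. The complex case is identical with $V_1(\C^n) = S^{2n-1}$ as the base and connectivity $(2n-2k)$ as the output.

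For the statement about the pair I would plug these two bounds into the long exact sequence
\begin{equation*}
\cdots \to \pi_i(\fr^\omega_{2k}(\R^{2n})) \to \pi_i(\fr_{2k}(\R^{2n})) \to \pi_i\!\left(\fr_{2k}(\R^{2n}),\,\fr^\omega_{2k}(\R^{2n})\right) \to \pi_{i-1}(\fr^\omega_{2k}(\R^{2n})) \to \cdots
\end{equation*}
For $i \leq 2n-2k-1$ the real bound kills $\pi_i(\fr_{2k}(\R^{2n}))$, and the complex bound kills $\pi_{i-1}(\fr^\omega_{2k}(\R^{2n}))$ (since $i-1 \leq 2n-2k-2 < 2n-2k$), so the relative group vanishes and the pair is $(2n-2k-1)$-connected as claimed.

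I do not expect any serious obstacle here: the entire argument is routine, the only minor points requiring care being the base cases of the two inductions and the index bookkeeping in the long exact sequence of the pair. One could alternatively bypass the induction by invoking the standard CW structure on $V_m(\R^N)$ (or $V_k(\C^n)$) whose lowest-dimensional cell above the basepoint has dimension $N-m$ (resp. $2n-2k+1$), but the fibration approach is cleaner.
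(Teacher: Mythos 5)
Your proposal is correct and follows essentially the same inductive argument as the paper: both reduce to the sphere base case via the standard frame/Stiefel fibrations (the paper works directly with the non-orthonormal frame spaces and base $\R^j\setminus\{0\}$ rather than first retracting to $V_m(\R^N)$ and $S^{N-1}$, but this is cosmetic), and both extract the pair connectivity from the long exact sequence of the pair.
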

\begin{proof}
 The first claim follows by an easy induction using fiber sequence $\R^{i-1} \times \fr_{i-1}(\R^{j-1}) \ra \fr_i(\R^j) \ra \R^{j}\setminus \{\vec{0}\}$, and the second claim similarly follows using the fiber sequence $\R^{2j-1}\times \fr_{2i-2}^\omega(\R^{2j-2}) \ra \fr_{2i}^\omega(\R^{2j}) \ra \R^{2j} \setminus \{\vec{0}\}$.
\end{proof}

\begin{proof}[Proof of Proposition \ref{prop:formal}]
Put $\dim M = 2n$ and $\dim Q = 2k$. According to Remark~\ref{rmk:gen_pos}, $f_0$ and $f_1$ can be joined by a smooth isotopy $f_t: Q \hookrightarrow M$, $t \in [0,1]$. Then by Lemma~\ref{lem:formal_iso_stiefel} it suffices to check that $(\fr_{2k}(\R^{2n}),\fr^\omega_{2k}(\R^{2n}))$ is $(2k+1)$-connected, and, since by assumption we have $2n-2k-1 \geq 2k+1$, this follows by Lemma~\ref{lem:frame_mfd_conn}.
\end{proof}

Proposition \ref{prop:h-principle} then follows from Proposition \ref{prop:formal} and the isotopy extension theorem in \cite{auroux_asymptotic}.

\subsection{A direct construction}

In this subsection we give an alternative proof of proposition \ref{prop:h-principle} in the special case that $\Sigma_g(L)$ has genus zero, which we write as $S^2(L)$. We use the following proposition found in the appendix of  \cite{Buhovsky_Opshtein}.

\begin{proposition}[{\cite[Lem. A.5]{Buhovsky_Opshtein}}] \label{prop:A.5}

Let $W$ denote an open subset of $\mathbb{C}^n$, $n\geq 3$,

which is diffeomorphic to a ball. Let $v_1, v_2:D(r) \rightarrow W$ be symplectic embeddings of a symplectic closed disc of area $r$ that coincide on a neighborhood of the boundary. Then there is a compactly supported Hamiltonian function $H:W\times[0,1] \rightarrow W$ such that near a neighborhood of $v_i(\partial D)$ we have $H(\cdot,t) =0$, and for $\phi_1$ the time $1$ flow of $H$ we have $\phi_1 \circ v_1 = v_2$.
\end{proposition}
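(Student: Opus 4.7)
The plan is to construct the desired Hamiltonian isotopy in three stages: produce a smooth isotopy from $v_1$ to $v_2$ rel a neighborhood of $\partial D$, upgrade it to a symplectic isotopy via a relative Moser argument on the disc, and then extend to an ambient Hamiltonian isotopy of $W$ supported away from $v_i(\partial D)$.

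For the first stage, since $W \subset \mathbb{C}^n$ is diffeomorphic to a ball with $n \geq 3$, the embedded disc has codimension $2n-2 \geq 4$. Standard general position arguments then show that the space of smooth embeddings $D(r) \hookrightarrow W$ which agree with $v_1$ on a fixed open neighborhood $U$ of $\partial D$ is path connected. This furnishes a smooth isotopy $\{v_t\}_{t\in[0,1]}$ from $v_1$ to $v_2$ with $v_t|_U = v_1|_U$ for every $t$.

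For the second stage, set $\omega_t := v_t^* \omega_{\mathrm{std}}$. Each $\omega_t$ is an area form on $D(r)$ agreeing with $\omega_0$ on $U$ and having total area $r$. Hence $\tfrac{d}{dt}\omega_t$ has compact support in $\nt D(r)$ and total integral zero, so admits a smooth family of primitives $\alpha_t$ compactly supported in $\nt D(r)$. Defining $X_t$ by $\iota_{X_t}\omega_t = -\alpha_t$ and integrating gives compactly supported diffeomorphisms $\psi_t$ of $D(r)$ with $\psi_0 = \mathrm{Id}$ and $\psi_t^*\omega_t = \omega_0$, so $w_t := v_t \circ \psi_t$ is a symplectic isotopy with $w_0 = v_1$ and $w_1 = v_2 \circ \psi_1$ for some area-preserving $\psi_1$ supported in $\nt D(r)$. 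Invoking connectedness of the group of compactly supported area-preserving diffeomorphisms of the open disc, we isotope $\psi_1$ back to the identity and concatenate to obtain a symplectic isotopy $\{\tilde v_t\}$ from $v_1$ to $v_2$, still constant on $U$.

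For the third stage, the time-dependent vector field generating $\{\tilde v_t\}$ along $\tilde v_t(D(r))$ is symplectic; using a Weinstein neighborhood of the trace $\bigcup_t \tilde v_t(D(r))$ we extend it to a symplectic vector field on a tubular neighborhood of the trace, arranged to vanish near $v_i(\partial D)$. Since $W$ is contractible, this vector field admits a generating Hamiltonian $\hat H_t$, and multiplying $\hat H_t$ by a smooth bump function supported in an even smaller tubular neighborhood yields the desired compactly supported Hamiltonian $H$ on $W$ whose time-$1$ flow takes $v_1$ to $v_2$ and which vanishes near $v_i(\partial D)$. The main obstacle I expect is Step 2: arranging that the Moser primitives $\alpha_t$ can be taken compactly supported (which uses $\int_{D(r)}\tfrac{d}{dt}\omega_t = 0$, forced by the rel boundary condition), and that the leftover diffeomorphism $\psi_1$ is resolved by a further symplectic isotopy, relying on path connectedness of compactly supported area-preserving diffeomorphisms of the open disc. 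The final cutoff in Step 3 must be done on the Hamiltonian function rather than directly on the vector field, to preserve the Hamiltonian property.
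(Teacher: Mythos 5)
The paper does not supply a proof of this proposition: it is quoted verbatim from Buhovsky--Opshtein \cite[Lem.\ A.5]{Buhovsky_Opshtein}, and the surrounding remark even stresses that the Buhovsky--Opshtein argument constructs the isotoping Hamiltonian explicitly, as a ``hands on'' alternative to the holonomic-approximation machinery that underlies Theorem~\ref{thm:h_princ_symp_sub}. So there is no paper proof to compare against; what must be evaluated is whether your argument stands on its own, and it does not.

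The gap is in Stage~2, where you set $\omega_t := v_t^*\omega_{\mathrm{std}}$ and assert that each $\omega_t$ is an area form on $D(r)$. The isotopy $\{v_t\}$ produced by general position in Stage~1 is only an isotopy of \emph{smooth} embeddings, and there is no reason for $v_t^*\omega_{\mathrm{std}}$ to be nondegenerate for $t\in(0,1)$. For instance, an embedded disc can be tangent at a point to an isotropic $2$-plane in $\C^n$ (think of $(x,y)\mapsto(x,0,y,0,\dots,0)$), at which $v_t^*\omega_{\mathrm{std}}$ vanishes; and this is not a removable phenomenon of high codimension, since near such an embedding the pullback form changes sign and is degenerate on an open set of nearby embeddings. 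Thus the subspace of embeddings (rel $U$) whose pullback of $\omega_{\mathrm{std}}$ is an area form is open but not dense in the space of all embeddings rel $U$, and connectedness of the ambient space gives no information about connectedness of this open subspace. Producing a path from $v_1$ to $v_2$ through embeddings onto \emph{symplectic} surfaces is precisely the nontrivial content of the statement; it is exactly what Gromov's h-principle for isosymplectic embeddings provides in codimension $\ge 4$, and exactly what the explicit Buhovsky--Opshtein construction achieves without the h-principle. Invoking either at this point would make the ``direct'' argument circular.

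Once a path through symplectic embeddings is in hand, the remainder of your outline is essentially correct (with minor imprecision in Stage~3: the ``trace'' $\bigcup_t \tilde v_t(D(r))$ is not itself a symplectic submanifold, so one should appeal to the symplectic isotopy extension theorem rather than a Weinstein neighborhood of the trace, and then use simple connectedness of $W$ and a cutoff on the Hamiltonian as you describe). But the Moser step on $D(r)$ and the ambient extension are both standard and are not where the work lies; the missing ingredient is the isotopy through symplectic discs, which your Stage~1 does not supply.
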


By contrast to Theorem~\ref{thm:h_princ_symp_sub}, which is based on the technique of holonomic approximation (see \cite[\S3]{eliashberg2002introduction}), the proof of \ref{prop:A.5} is very hands on and describes the Hamiltonian needed to realize this isotopy fairly explicitly.

\begin{remark}
The statement in \cite{Buhovsky_Opshtein} it requires $W$ to be symplectomorphic to the standard symplectic ball, but upon inspecting the proof this assumption is not needed.
\end{remark}

\begin{proof}[Proof of genus zero version of Proposition \ref{prop:h-principle}]
We consider first $f:S^2(L) \hooksymp B^4(R) \times S^2(L)$ a symplectic embedding in the homology class $[\pt] \times [S^2]$. We shall first explain how to symplectically isotope this to the zero section of the form $0 \times S^2(L)$.

    Let $p \in S^2$ be a point and $D_\epsilon(p) \subset S^2$ be a disc of radius $\epsilon$ around $p$. We can, via a symplectic isotopy, assume that $f$ maps $D_\epsilon(p)$ to $0 \times D_\epsilon(p) \subset B^4(R) \times D_\epsilon(p)$. 
    
    Next consider $F_p$, the fiber of the trivial bundle $B^4(R) \times S^2 \rightarrow S^2(L)$ centered at $p$. Then $F_p$ is diffeomorphic to $B^4(R)$ and has homological intersection 1 with the image of $f$. We deform $F_p$ by the Whitney trick so that its geometric intersection with the image of $f$ is one and precisely at $p$. During this deformation we assume the boundary of $F_p$ remains on $\partial B^4(R) \times S^2(L)$. We denote the resulting submanifold by $\tilde{F}_p$.

    Let $\tilde{F}_{\epsilon'}(p)$ denote a thickening of size $\epsilon' \ll \epsilon$ of $\tilde{F}_p$, so that $\tilde{F}_{\epsilon'}(p) \cap 0\times S^2(L) = 0 \times D_{\epsilon'}(p)$. Then consider
    \[
    W_{\epsilon'} : =  B^4(R) \setminus \tilde{F}_{\epsilon'}(p).
    \]
    Then $W_{\epsilon'}$ is diffeomorphic to the six ball (but not necessarily symplectomorphic). Then consider $f: S^2(L) \setminus D_{2\epsilon'}(p) \rightarrow W_{\epsilon'}$. By A.5 there is a symplectic isotopy that takes $f|_{ S^2(L) \setminus D_{2\epsilon'}(p)}$ to $0 \times S^2 \setminus D_{2\epsilon'}(p) \subset B^4(R) \times S^2(L)$ that fixes a neighborhood of the boundary. Since this isotopy fixes a neighborhood of the boundary we can extend it to all of $S^2(L)$, and obtain an isotopy from $f|_{S^2(L)}$ to $0\times S^2$. This symplectic isotopy may have self intersections, in particular interior intersections with $0\times D_{2\epsilon'}(p)$, but we can perturb them away generically. Since $S^2(L)$ is simply connected, we can upgrade this isotopy of symplectic embeddings to an ambient Hamiltonian isotopy, supported away from $\partial B^4(R) \times S^2(L)$.

To prove the result for a collection of disjointly symplectically embedded $S^2(L)$s, we observe we can perturb the isotopies of symplectic embeddings so that the isotopies of embedded symplectic spheres don't intersect each other. For spheres all symplectic isotopies are Hamiltonian, so we can extend the family of symplectic embeddings to an ambient Hamiltonian flow.
\end{proof}

\section{Proof of main theorem}\label{sec:pf_main_thm}
In this section we assemble the previous ingredients to prove our main theorem. As preparation, we first give a review of symplectic ball packings in dimension four.

\subsection{Review of ball packings in dimension four}

Given a collection of points $p_1,\dots,p_k \in \CP^2(R)$, we denote the corresponding complex blowup at these points by $\bl_{\{p_i\}}\CP^2$. Let $A \in H_2(\bl_{\{p_i\}}\CP^2)$ denote the line class and $E_1,\dots,E_k \in H_2(\bl_{\{p_i\}}(\CP^2)$ the homology classes of the corresponding exceptional divisors.

Let $-K:=3A-\sum_i E_i \in H_2(\bl_{\{p_i\}}\CP^2)$ denote the anticanonical class.  We define $\mathcal{E}_K\subset H_2(\bl_{\{p_i\}} (\CP^2)$ to be the set of homology classes $F$ satisfying $K\cdot F=-1$ and $F \cdot F = -1$ that can be represented by a smoothly embedded sphere in $\bl_{\{p_i\}} (\CP^2)$. 

\begin{definition} [{\cite[Prop 3.2]{McDuff_Hofer_conjecture} or \cite[Def. 1.3.A]{mcduff1994symplectic}}]
    A vector $(d,m_1,...,m_k) \in \mathbb{Z}_+ \times \mathbb{Z}^k_{\geq0}$ is called {\em exceptional} if the homology class $dA - \sum_i^k m_i E_i$ belongs to $\mathcal{E}_K$.
\end{definition}

\begin{proposition} [{\cite[Prop 3.2]{McDuff_Hofer_conjecture} or \cite[Thm 1.3.C]{mcduff1994symplectic}}] \label{prop:exceptional_vector_obstruct} 
Suppose that a symplectic packing of $k$ balls of area $R_i$ in $\CP^2(R)$ exists, then for each exceptional vector $(d,m_1,\cdots,m_k)$ we have the inequality
\[
\sum_i^k m_i R_i < d  R.
\]
In fact, the set of obstructions coming from exceptional vectors is equivalent to the combinatorial obstructions listed in \ref{thm:combinatorial_packing_obstructions}, and hence form a system of sharp ball packing obstructions.
\end{proposition}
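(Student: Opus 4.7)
The plan is to translate the packing, via the blowup construction of Section~\ref{sec:symplectic_blow_up}, into a positivity question for the symplectic cohomology class on a blowup of $\CP^2$, and then invoke the gauge-theoretic fact that each class in $\mathcal{E}_K$ pairs positively with any symplectic form. First, given $\bigsqcup_{i=1}^k B^4(R_i) \hooksymp \nt(\CP^2(R))$, I would apply the point blowup construction of \S\ref{subsec:pt_blowup} at each ball to obtain a closed symplectic four-manifold $(X,\omega_X) := \bl_{\{\iota_i\}}\CP^2(R)$. By Proposition~\ref{prop:main_properties_of_blowup}\ref{item:homology_descr}, we have $H_2(X;\R) \cong \R A \oplus \bigoplus_{i=1}^k \R E_i$ with $[\omega_X] \cdot A = R$ and $[\omega_X] \cdot E_i = R_i$, so for any exceptional vector $(d;m_1,\dots,m_k)$ and the associated class $F = dA - \sum_i m_i E_i \in \mathcal{E}_K$ we have
\[
[\omega_X] \cdot F = dR - \sum_{i=1}^k m_i R_i.
\]

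Next, I would invoke the key gauge-theoretic input: for \emph{any} symplectic form on a blowup of $\CP^2$, every class in $\mathcal{E}_K$ is represented by an embedded $J$-holomorphic sphere for generic $\omega_X$-tame $J$. This follows from Taubes' SW $=$ Gr theorem, combined with the nonvanishing of the relevant Seiberg-Witten invariant on blowups of $\CP^2$ via wall-crossing for $b^+ = 1$ manifolds; it is precisely the content used in \cite{mcduff1994symplectic,McDuff_Hofer_conjecture} in this exact setup. Positivity of symplectic area for a $J$-holomorphic curve then yields
\[
0 < [\omega_X] \cdot F = dR - \sum_{i=1}^k m_i R_i,
\]
as required; strictness is automatic because every $J$-holomorphic curve has strictly positive area (and is also consistent with the fact that the balls embed into the open domain, so they can be slightly enlarged).

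The main obstacle is the existence of the $J$-holomorphic representative of $F$ for arbitrary $\omega_X$ (as opposed to a Kähler form, where it is obvious). This is the gauge-theoretic fact quoted above; once it is in hand, the inequality is immediate from the area computation. The asserted equivalence with the combinatorial obstructions of Theorem~\ref{thm:combinatorial_packing_obstructions} is a separate combinatorial statement, addressed by the Cremona reduction of exceptional vectors worked out in \cite{mcduff1994symplectic,li_li_2002} and alluded to in the footnote to Theorem~\ref{thm:combinatorial_packing_obstructions}; I would simply cite this reduction rather than reprove it.
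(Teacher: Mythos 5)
Your proposal follows essentially the same route as the paper: blow up the packing, compute that the symplectic cohomology class on the blowup is $R\alpha + \sum_i R_i e_i$, invoke the gauge-theoretic fact that every class in $\mathcal{E}_K$ is represented by an embedded symplectic sphere on a blowup of $\CP^2$ (the paper cites Li--Li--Liu results, which are exactly the SW wall-crossing for $b^+=1$ plus Taubes' SW $=$ Gr input you describe), read off the inequality from positivity of area, and cite McDuff--Li--Li for the Cremona equivalence with the combinatorial obstructions. No gaps; the only cosmetic difference is which reference is named for the existence of the symplectic sphere representative.
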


The proof of the obstructions proceed as follows. Suppose such a ballpacking $\iota: \bigsqcup_{i=1}^k B^4(R_i) \hooksymp CP^2(R)$ exists, this induces a symplectic blow up of radius $R_i$ around each embedded ball. We denote this blow up by $\bl_{\{\iota_i\}}(\CP ^2(R))$.The blown up symplectic form on $\bl_{\{\iota_i\}}(\CP ^2(R))$ has first Chern class Poincare dual to $K$ and cohomology class $R\alpha + \sum_i R_i e_i$. This means each exceptional rational curve can be represented by a rational symplectic sphere (Lemma 3.5 \cite{li2001uniqueness}, see also Theorem C in \cite{li_li_2002}), and knowing the area of a symplectic sphere is positive gives the constraints above.
Through the work of McDuff \cite[Prop. 3.2]{McDuff_Hofer_conjecture}, the set of obstructions coming from exceptional vectors are in fact equivalent to the obstructions presented in Theorem \ref{thm:combinatorial_packing_obstructions}.

We now record some properties of the exceptional curves from which the embedding obstruction arises. The description below follows from the exposition in \cite[Prop. 2.3.A]{mcduff1994symplectic}.

Let $C$ denote an exceptional symplectic sphere on $\bl_{\{\iota_i\}}(\CP ^2(R))$ in the homology class given by the exceptional vector $(d,m_1,...,m_k)$, and $J$ a generic compatible almost complex structure chosen so that $C$ is $J$-holomorphic. Then as explained in \cite{mcduff1994symplectic}, the curve $C$ is rigid, regular,
 and unique in its homology class. Hence it follows that
\begin{lemma}\label{lem:exceptional_curves_GW}
Let $(d,m_1,\dots, m_k)$ be an exceptional vector and $[C]$ the corresponding homology class in $\bl_{\{\iota_i\}}(\CP ^2(R))$, then
\[GW^{\bl_{\{\iota_i\}}(\CP ^2(R))}_{0,0,[C]} = 1.\] 
\end{lemma}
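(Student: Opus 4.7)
The plan is to deduce $GW_{0,0,[C]} = 1$ from three ingredients: the uniqueness and regularity of the exceptional curve $C$ recalled in the paragraph just above the lemma, a virtual dimension count showing the relevant moduli space is zero-dimensional, and positivity of signs for transverse somewhere-injective $J$-holomorphic spheres in a symplectic $4$-manifold.

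First I would observe that $\bl_{\{\iota_i\}}(\CP^2(R))$ is a closed symplectic $4$-manifold and is therefore automatically semi-positive (semi-positivity being vacuous in complex dimension two), so that $GW_{0,0,[C]}^{\bl_{\{\iota_i\}}(\CP^2(R))}$ is well defined classically as a signed geometric count of unparameterized $J$-holomorphic spheres for a generic $\omega$-compatible $J$. Since $[C]$ comes from an exceptional vector we have $-K\cdot [C] = 1$ and $[C]\cdot [C] = -1$, so $c_1([C]) = 1$, and hence the expected dimension of $\tilde{\mathcal{M}}_{0,0}(\bl_{\{\iota_i\}}(\CP^2(R)),[C];J)$ equals $2c_1([C]) + 2\cdot 2 - 6 = 0$.

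Next, by the uniqueness, embeddedness, and regularity of the exceptional curve recalled above (cf.\ \cite{mcduff1994symplectic, li2001uniqueness, li_li_2002}), for a generic compatible $J$ the moduli space $\tilde{\mathcal{M}}_{0,0}(\bl_{\{\iota_i\}}(\CP^2(R)),[C];J)$ contains exactly one point, which is a transverse somewhere-injective curve. The remaining point to check is compactness: any Gromov limit would decompose $[C]$ as a sum of classes carried by stable genus zero components of positive symplectic area, but since $[C]^2 = -1$ and $c_1([C]) = 1$, positivity of intersections together with the structure of $\mathcal{E}_K$ in blowups of $\CP^2$ rules out such decompositions for a generic $J$, as in the analysis underlying Proposition~\ref{prop:exceptional_vector_obstruct}.

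Finally, a single transverse somewhere-injective $J$-holomorphic sphere in a semi-positive symplectic $4$-manifold contributes $+1$ to the zero-dimensional GW count, since the Cauchy--Riemann orientation on the moduli space of simple curves agrees with the orientation used to define GW invariants. Combining these three observations yields $GW_{0,0,[C]}^{\bl_{\{\iota_i\}}(\CP^2(R))} = 1$. The main (mild) technical point is the absence of bubbling in the Gromov compactification, which is in any case implicit in the uniqueness statement cited above.
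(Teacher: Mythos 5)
Your proposal is correct and follows essentially the same route as the paper: the paper states the lemma immediately after recalling (with a citation to \cite{mcduff1994symplectic}) that the exceptional curve $C$ is rigid, regular, and unique in its homology class for a suitable generic compatible $J$, and treats the lemma as an immediate consequence. You are simply filling in the standard background that the paper leaves implicit — that closed symplectic four-manifolds are automatically semi-positive so the Gromov--Witten count is defined geometrically, that $c_1([C]) = -K\cdot[C] = 1$ gives expected dimension zero for the unmarked genus-zero moduli space, and that the unique exceptional representative contributes $+1$. The compactness/bubbling discussion you add is a reasonable sanity check but is already subsumed in the uniqueness and regularity assertion cited from \cite{mcduff1994symplectic}, which the paper (and your argument) both rely on as the key input.
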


\subsection{Proof of main theorem}
For $\epsilon>0$ sufficiently small, consider the symplectic embedding $j:\bigsqcup_{i=1}^kB^4(\epsilon) \hooksymp \CP^2(R)$ with the centers of the balls given by $\{p_i\}\subset \CP^2(R)$. Let $\bl_{\{j_i\}}(\CP^2(R))$ denote the resulting blow up. Recall second homology group of $\bl_{\{j_i\}}(\CP^2(R))$  as a vector space over $\mathbb{R}$ is given by
\[
H_2(\bl_{\{j_i\}}(\CP^2(R))) = \mathbb{R}A \oplus \bigoplus_{i=1}^k \mathbb{R}E_i
\]
Consider the embedding $\hat{j}: \bigsqcup_{i=1}^k B^4(\epsilon) \times \Sigma_g(L) \hooksymp \CP^2(R) \times \Sigma_g(L)$, which is given by the extension of $j$ by identity in the second component. Consider the space $(\bl_{\{j_i\}}(\CP^2(R)))\times \Sigma_g(L)$. This space is the same as the size $\epsilon$ blowup of $\CP^2(R) \times \Sigma_g(L)$ along $\{p_i\}\times \Sigma_g(L)$, which we denote by $\bl_{\{\hat{j}_i\}}(\CP^2 \times \Sigma_g(L))$. The second cohomology of this space is given by
\[
H^2(\bl_{\{\hat{j}_i\}}(\CP^2 \times \Sigma_g(L))) = \mathbb{R}\alpha \bigoplus \bigoplus_{i=1}^k \mathbb{R}e_i \bigoplus \mathbb{R} \omega_{L}.
\]
Here $\alpha$ is the cohomology class of the pullback of the Fubini-Study form on $\CP^2(R)$ to the blowup $\bl_{\{j_i\}}(\CP^2(R))$ (see Proposition \ref{prop:main_properties_of_blowup}). We normalize it so that $\langle [\CP^1], \alpha \rangle =1$. Similarly let $[\Sigma_g(L)]$ denote the fundamental class of $\Sigma_g(L)$ under inclusion, we have the normalization $\langle [\Sigma_g(L)], \omega_L \rangle  =L$. 
The resulting blown up symplectic form on this space has cohomology class given by
\[
R[\alpha] + \sum \epsilon [e_i] + [\omega_L].
\]
\begin{proof}[Proof of Theorem \ref{thm:main_thm}]
It suffices to show for any exceptional vector gives rise to the same obstruction as Proposition \ref{prop:exceptional_vector_obstruct}.

Suppose we have stabilized embeddings $\iota: \bigsqcup_{i=1}^k B^4(R_i) \times \Sigma_{g}(L)\hooksymp B^4(R) \times \Sigma_g(L)$. We can view this as collection of symplectic embeddings into $\CP^2(R) \times \Sigma_{g}(L)$. We form the size $R_i$ blowup and denote the resulting space by $\bl_{\{\iota_i\}}(\CP^2(R) \times \Sigma_g(L))$.

\textbf{Claim}: The symplectic manifolds $\bl_{\{\hat{j}_i\}}(\CP^2(R) \times \Sigma_g(L))$ and $\bl_{\{\iota_i\}}(\CP^2(R) \times \Sigma_g(L))$ are deformation equivalent. The symplectic form on $\bl_{\{\iota_i\}}(\CP^2(R) \times \Sigma_g(L))$ lives in the cohomology class $
R[\alpha] +\sum R_i [e_i] + [\omega_L].$

The first part of the claim follows from the second part of Proposition \ref{prop:main_properties_of_blowup}, and the second claim follows from the first part of Proposition \ref{prop:main_properties_of_blowup}.

To finish the proof,
consider an exceptional curve $C$ corresponding to the exceptional vector $(d,m_1,...,m_l)$ in $\bl_{\{j_i\}}(\CP^2(R))$. By lemma \ref{lem:exceptional_curves_GW}, $GW_{0,0,[C]}^{\bl_{\{j_i\}}(\CP^2(R))} =1$, and hence Corollary \ref{cor:GW} implies \[GW_{0,1,[C]\times [\pt]}^{\bl_{\{\hat{j}_i\}}(\CP^2(R) \times \Sigma_g(L))}([\bl_{\{j_i\}}(\CP^2(R)]\times [\pt]) = \pm 1.\]
Since Gromov Witten invariants are preserved by deformations of the symplectic form, this means the same Gromov Witten invariant is nonzero for $\bl_{\{\iota_i\}}(\CP^2(R)\times \Sigma_g(L))$. Measuring this Gromov-Witten invariant against the cohomology class 
\[
R[\alpha] + \sum R_i [e_i] + [\omega_L]
\]
and noting the fact the integral of a symplectic form on any $J$-holomorphic curves is positive, we have
\[
    \sum m_iR_i < dR.
\]
\end{proof}

\begin{remark}
\label{rmk:stab_in_higher_dimensions}
Let $N$ denote any closed symplectic manifold. 
Assume there is a version of Gromov-Witten invariants for which the conclusion of Corollary \ref{cor:GW}
holds without any assumption on semi-positivity.
Consider symplectic embeddings $F:\bigsqcup\limits_{i=1}^k B^4(R_i)\times N \hooksymp \nt B^4(R) \times N$ such that $F(\bigsqcup\limits_{i=1}^k 0 \times N) \subset \nt B^4(R) \times N$ is symplectically isotopic to the symplectic submanifold $\bigsqcup\limits_{i=1}^k \{p_i\} \times N$. Then it follows from the techniques of this paper that such symplectic embedding is possible if and only if there is a symplectic embedding $\bigsqcup\limits_{i=1}^k B^4(R_i)\hooksymp \nt B^4(R)$.

To prove this, the only difference from the proof of Theorem \ref{thm:main_thm} is the lack of an h-principle, but this is provided by the symplectic isotopy of the blowup locus stated here as an assumption. The existence of holomorphic curves is provided by Gromov-Witten invariants.

\end{remark}

\section{Stabilized embeddings for toric domains} \label{sec:stabilized_toric}
Consider the moment map $\mu: \mathbb{C}^2 \rightarrow \mathbb{R}^2$ given by
\[
(z_1,z_2) \rightarrow ( \pi |z_1|^2, \pi |z_2|^2).
\]
Let $\Omega$ denote a region in $\mathbb{R}^2$, we define a \emph{toric domain} $X_\Omega$ to be the pre-image of $\Omega$ under $\mu$.
\begin{definition} [\cite{DanCG_Concave_into_Convex}]\label{def:concave_tor}
A toric domain $X_\Omega$ is concave if $\Omega$ is the region in the first quadrant under the graph of a convex function $f: [0,a] \rightarrow [0,b]$, with $a,b>0$ and $f(0)=b$ and $f(a)=0$.
\end{definition}

\begin{definition} [\cite{DanCG_Concave_into_Convex}]
\label{def:convex_tor}
    A toric domain $X_\Omega$ is convex if $\Omega$ is a closed region in the first quadrant bounded by the coordinate axes and a convex curve connecting $(a,0)$ and $(0,b)$, for $a,b >0$.
\end{definition}

\begin{definition} [\cite{DanCG_Concave_into_Convex}]
We say a toric domain $X_\Omega$ is rational, if $\Omega$ has upper boundary that is piecewise linear with rational slopes.
\end{definition}

Given a rational convex or concave toric domain, we can associate to it a finite sequence of numbers called a \emph{weight sequence}. For details of this construction we refer the reader to \cite{DanCG_Concave_into_Convex}. The point is that for a concave toric domain we can associate to it a sequence of real numbers potentially with repetitions $w(\Omega) = (a_1,a_2,...)$. Associated to this sequence of numbers is a collection of symplectic balls, which we write as
\[
B^4(\Omega) = \bigsqcup_i B^4(a_i).
\]

For convex toric domains, the weight sequence $w(\Omega)$ takes the form $(b; b_1,b_2...)$. The first number $b$ is called the head, and the remaining un-ordered sequence of numbers is called the negative weight sequence. Associated to the negative weight sequence is a collection of symplectic balls of the form
\[
\hat{B}(\Omega) = \bigsqcup_i B^4(b_i).
\]

\begin{theorem}[{\cite[Thm 2.1]{DanCG_Concave_into_Convex}}]
Let $X_{\Omega_1}$ be a rational concave toric domain, and let $X_{\Omega_2}$ be a rational convex toric domain. Using the notation for weight sequences as above, 
  there exists a symplectic embedding
\[
\nt(X_{\Omega_1}) \hooksymp \nt(X_{\Omega_2})
\]
if and only if there exists a symplectic embedding
\[
\nt(B(\Omega_1) )\bigsqcup \nt (\hat{B}(\Omega_2)) \hooksymp \nt(B(b_1)).
\]

\end{theorem}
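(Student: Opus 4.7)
The plan is to follow the strategy introduced by McDuff for rational ellipsoids and extended by Cristofaro-Gardiner. The starting point is two geometric decomposition results that give the weight sequences their symplectic meaning. First, for a rational concave toric domain $X_{\Omega_1}$, one shows from the toric structure that the interior $\nt(X_{\Omega_1})$ is symplectomorphic to the interior of a disjoint union $\bigsqcup_i B^4(a_i) = B(\Omega_1)$ whose sizes come from the weight sequence; this can be constructed explicitly by iteratively cutting $\Omega_1$ into Delzant triangles and using equivariant symplectic identifications along the cuts. Second, for a rational convex toric domain $X_{\Omega_2}$ with head $b$, one shows that $\nt(B^4(b)) \setminus X_{\Omega_2}$ contains a symplectically embedded disjoint union $\hat{B}(\Omega_2) = \bigsqcup_j \nt(B^4(b_j))$ whose sizes form the negative weight sequence of $\Omega_2$.

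With these two decompositions in hand, the forward direction is essentially formal. Given $\nt(X_{\Omega_1}) \hooksymp \nt(X_{\Omega_2})$, I would precompose with the ball decomposition $\nt(B(\Omega_1)) \hooksymp \nt(X_{\Omega_1})$ and postcompose with the inclusion $\nt(X_{\Omega_2}) \hookrightarrow \nt(B^4(b))$ to produce a symplectic embedding of $\nt(B(\Omega_1))$ into $\nt(B^4(b))$. Since the image lies inside $\nt(X_{\Omega_2})$ while the complementary balls $\nt(\hat{B}(\Omega_2))$ sit in the complement, the two collections are disjoint and together give the required ball packing.

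The main obstacle is the reverse direction, where a genuine embedding of toric domains must be reconstructed from the ball packing. The idea is to work in a symplectic blowup: starting from a packing $\nt(B(\Omega_1)) \sqcup \nt(\hat{B}(\Omega_2)) \hooksymp \nt(B^4(b))$, perform symplectic blowups at the images of the $\hat{B}(\Omega_2)$-balls and compare the resulting symplectic four-manifold with the blowup of $\nt(B^4(b))$ along the canonical copies of $\hat{B}(\Omega_2)$ that sit in the complement of $X_{\Omega_2}$. The two blowups have the same underlying smooth type and, after adjusting the symplectic form by inflating along suitable embedded $J$-holomorphic exceptional spheres, the same cohomology class. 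Then McDuff's deformation-to-isotopy theorem \cite{mcduff_from_def_to_iso} upgrades this cohomological match to an actual symplectomorphism. Pulling $\nt(B(\Omega_1))$ through this symplectomorphism yields an embedding into the complement of the exceptional divisors; reversing the first decomposition then produces the sought embedding $\nt(X_{\Omega_1}) \hooksymp \nt(X_{\Omega_2})$. The hard part is the inflation step, which is intrinsically four-dimensional, relying on positivity of intersections and on the existence of embedded $J$-holomorphic representatives of exceptional classes in blowups of rational surfaces, as developed in \cite{McDuff_2009,DanCG_Concave_into_Convex}.
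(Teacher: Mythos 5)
This theorem is not proven in the paper under review; it is stated twice (once in the introduction as Theorem~\ref{thm:Dan_concave_into_convex} and once in \S\ref{sec:stabilized_toric}) but in both cases simply cited from \cite{DanCG_Concave_into_Convex}, with the weight sequence constructions also deferred to that reference. So there is no ``paper's own proof'' to compare against; what you have written is an outline of the proof in the cited source, building on \cite{McDuff_2009} and \cite{mcduff_from_def_to_iso}, and it captures the correct strategy: ball decompositions of the concave source and of the complement of the convex target, a formal composition for the forward implication, and blowup plus inflation plus deformation-to-isotopy for the converse.

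One point in your sketch is stated too strongly and is in fact false as written. You assert that $\nt(X_{\Omega_1})$ \emph{is symplectomorphic to} $\nt(B(\Omega_1)) = \bigsqcup_i \nt(B^4(a_i))$. That cannot be, since the left side is connected and the right side is disconnected whenever there is more than one weight. The correct statement, due to McDuff for ellipsoids and extended by Cristofaro-Gardiner, is an equivalence of embedding problems: for any open target $Y$ one has $\nt(X_{\Omega_1}) \hooksymp Y$ if and only if $\nt(B(\Omega_1)) \hooksymp Y$. The balls coming from the Delzant-triangle cutting procedure sit \emph{inside} $X_{\Omega_1}$ but do not exhaust it, and the reverse implication (that the ball packing can be ``reassembled'' into a toric domain embedding) is itself nontrivial and again runs through inflation and deformation-to-isotopy. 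Your later steps implicitly use only this weaker embedding-equivalence, so the overall argument survives, but the symplectomorphism claim should be replaced by the embedding equivalence. The same caveat applies to the decomposition of $\nt(B^4(b)) \setminus X_{\Omega_2}$ into the balls $\hat{B}(\Omega_2)$: those balls embed disjointly in the complement but do not fill it.
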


\begin{proof}[Proof of Theorem \ref{thm:stabilized_toric}]
It suffices to prove the only if direction.

\textbf{Step 1}. We first prove it for the case where both $X_{\Omega_1}$ and $X_{\Omega_2}$ are rational. Suppose we have a symplectic embedding
\[
\nt(X_{\Omega_1}) \times \Sigma_g(L) \hooksymp \nt(X_{\Omega_2}) \times \Sigma_g(L)\]

If we decompose both $X_{\Omega_1}$ and $X_{\Omega_2}$ with the weight sequence, we obtain stabilized (open) ball packings
\[
\nt(B(\Omega_1)) \times \Sigma_g(L) \bigsqcup \nt (\hat{B}(\Omega_2)) \times \Sigma_g(L) \hooksymp \nt(B(b_1)) \times \Sigma_g(L).
\]
Then for any $0<\lambda<1$, there exists packings 
\[
(B(\lambda \Omega_1)) \times \Sigma_g(L) \bigsqcup (\hat{B}(\lambda \Omega_2)) \times \Sigma_g(L) \hooksymp \nt(B(b_1)) \times \Sigma_g(L).
\]
Here we use $\lambda \Omega_i$ to denote the toric domain rescaled by $\lambda$. Then by 
Theorem \ref{thm:main_thm} then implies there exists symplectic embeddings 
\[
B( \lambda \Omega_1) \bigsqcup \hat{B}( \lambda \Omega_2)) \hooksymp B(b_1).
\]
for all $0<\lambda<1$. As explained in \cite{McDuff_2009}, this means $
\nt(B(\Omega_1)) \bigsqcup \nt(\hat{B}( \Omega_2)) )\hooksymp B(b_1).
$
which implies a symplectic embedding
\[
\nt(X_{\Omega_1}) \hooksymp \nt(X_{\Omega_2}).\]

\textbf{Step 2}. To deduce the general case, suppose $X_{\Omega_1}$ is any concave toric domain and $X_{\Omega_2}$ is any convex toric domain, with an embedding of the form
\[
\nt(B(\Omega_1) \times \Sigma_g(L) \bigsqcup \nt (\hat{B}(\Omega_2)) \times \Sigma_g(L) \hooksymp \nt(B(b_1)) \times \Sigma_g(L).
\]
The for any rational concave toric domain $\Omega' \subset \Omega_1$, and for any rational convex toric domain $\Omega'' \supset \Omega_2$, the previous set implies we have the symplectic embedding
\[
\nt(X_{\Omega'})\hooksymp \nt(X_{\Omega''}).
\]
Then the conclusion of our theorem follows as in the proof of \cite[Thm. 1.2]{DanCG_Concave_into_Convex}.
\end{proof}

\section{Higher dimensional ball packings} \label{section:higher_dim_packing}

To motivate the ball packing problem we first give a review of the proof of Gromov's two ball theorem.

\begin{proof}[Proof of Theorem~\ref{thm:two_ball}]

Given a symplectic embedding $\iota: B^{2n}(R_1) \bigsqcup B^{2n}(R_2) \hooksymp \nt B^{2n}(R)$. We compactify the target $\nt B^{2n}(R)$ to $\CP^n(R)$. Consider the corresponding blowup, $\bl_{\{\iota_i\}}\CP^n(R)$. 
Recall that we have a natural identification $H_2(\bl_{\{\iota_i\}}\CP^n(R)) \cong \Z\langle A,E_1,E_2\rangle$,
where $A\in H_2(\bl_{\{\iota_i\}}\CP^n(R))$ denotes the line class and $E_1,E_2 \in H_2(\bl_{\{p_i,R_i\}}\CP^n(R))$ denote the homology classes of lines in the resulting exceptional divisors.
Recall that the Gromov--Witten invariant $GW^{\bl_{\{\iota_i\}}\CP^n(R)}_{0,0,A-E_1-E_2}$ is nonzero (essentially since there is a unique line through any two points in projective space). \footnote{To be more precise, since this blowup is not necessarily semi-positive, some additional technology is needed to make sure this is well defined. In the case of the Gromov two ball theorem the proof can be carried out entirely without blow up, however to be compatible with the formal calculations we provide below for Conjecture \ref{conj:high_d_ball_packing}, we phrase everything in terms of the blown up projective space.}
In particular, for any tame almost complex structure $J$ on $\bl_{\{\iota_i\}}\CP^n(R)$, the compactified moduli space of stable maps
$\ovl{\calM}_{0,0}(\bl_{\{\iota_i\}}\CP^n(R),A-E_1-E_2; J)$ is nonempty, and the energy of any curve $C \in \ovl{\calM}_{0,0}(\bl_{\{\iota_i\}}\CP^n(R),A-E_1-E_2; J)$ satisfies
$0 \leq \En(C) = R - R_1 - R_2$.
\end{proof}

Now let $\iota$ be a symplectic embedding of the form given in Conjecture \ref{conj:high_d_ball_packing}, and let $\bl_{\{\iota_i\}}\CP^n(R)$ denote the corresponding symplectic blowup of $\CP^n$ at $k$ points.
 % (we will supress $\iota$ from the notation when the choice of symplectic embedding is implicit or immaterial).
For nonnegative integers $d,m_1,\dots,m_k \in \Z_{\geq 0}$, put
\begin{align}
\En_\veca(d;m_1,\dots,m_k) := dR - \sum_{i=1}^k m_iR_i,
  \end{align}
which represents the energy of a hypothetical pseudoholomorphic curve in $\bl_{\{\iota_i\}}\CP^n(R)$ in homology class $[C] := dA - \sum_{i=1}^k m_iE_i \in H_2(\bl_{\{\iota_i\}}\CP^n(R))$.
Similarly, for $g \in \Z_{\geq 0}$ put
\begin{align}
\ind_g(d;m_1,\dots,m_k) := (n-3)(2-2g) + 2(n+1)d - 2(n-1)\sum_{i=1}^k m_i,
\end{align}
which represents the (real) Fredholm index of a hypothetical curve in $\bl_{\{\iota_i\}}\CP^n(R)$ of genus $g$ in homology class $[C]$ (without any further constraints imposed).

\begin{lemma}\label{lem:beyond_two_ball}
Given $n \geq 3$, $d \geq 1$, and $\veca = (a_1,\dots,a_k) \in \R_{>0}^k$ for some $k \in \Z_{\geq 1}$, suppose that we have:
\begin{enumerate}[label=(\alph*)]
  \item \label{item:eq} $\ind_g(d;m_1,\dots,m_k) \geq 0$
  \item\label{item:one_ball} $m_i \leq d$ for all $1 \leq i \leq k$
  \item\label{item:two_ball} $R_i + R_j \leq R$ for all $1 \leq i < j \leq k$.
\end{enumerate}
Then we have $\En_\veca(d;m_1,\dots,m_k) \geq 0$.
\end{lemma}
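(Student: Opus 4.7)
The plan is to first use hypothesis (a) to bound $M := \sum_{i=1}^k m_i$ from above, then separately bound $\sum_{i=1}^k m_i R_i$ using only the pairwise two-ball constraints in (c). Since $n \geq 3$, the genus contribution $(n-3)(2-2g)$ to $\ind_g$ is nonincreasing in $g$, so hypothesis (a) in fact implies $\ind_0(d;m_1,\dots,m_k) \geq 0$, which after rearrangement yields
\[
M \;\leq\; \frac{(n+1)d + (n-3)}{n-1}.
\]
An elementary check shows that $\tfrac{(n+1)d + (n-3)}{n-1} \leq 2d$ is equivalent to $(n-3)(d-1) \geq 0$, which holds under the standing hypotheses $n \geq 3$ and $d \geq 1$. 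Hence $M \leq 2d$.

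Next I claim the key geometric inequality
\[
\sum_{i=1}^k m_i R_i \;\leq\; \max\bigl(m_1,\ M/2\bigr)\cdot R,
\]
where $m_1 := \max_i m_i$. Combined with (b) (which gives $m_1 \leq d$) and the bound $M/2 \leq d$ just obtained, this immediately yields the desired conclusion $\sum m_i R_i \leq dR$, i.e., $\En_\veca(d;m_1,\dots,m_k) \geq 0$.

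To prove this inequality, I would analyze the linear program of maximizing the linear functional $\sum m_i R_i$ over the bounded polytope
\[
P \;:=\; \bigl\{R \in \R^k : R_i \geq 0 \text{ for all } i,\ R_i + R_j \leq R\ \text{ for all } i < j\bigr\}.
\]
A direct enumeration shows that every vertex of $P$ is of one of three forms: (i) the origin, with objective value $0$; (ii) $R_{i_0} = R$ with all other $R_j = 0$ for some $i_0$, with value $m_{i_0} R$; or (iii) for some $S \subseteq \{1,\dots,k\}$ with $|S| \geq 3$, the configuration $R_i = R/2$ for $i \in S$ and $R_i = 0$ otherwise, with value $(R/2)\sum_{i \in S} m_i \leq (R/2)M$. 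Maximizing over these possibilities yields exactly $\max(m_1, M/2)\cdot R$.

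The only mildly technical point is the vertex enumeration, which rests on the observation that any configuration with exactly $|S|=2$ nonzero entries fails to be a vertex (only one pair-equality can bind nontrivially, falling one short of the needed count), while $|S|\geq 3$ forces the complete system of equalities $R_i+R_j = R$ for $i,j \in S$ and hence $R_i = R/2$ uniformly on $S$. The edge case $k=1$ is slightly degenerate: here hypothesis (c) is vacuous, and the conclusion requires the implicit input $R_1 \leq R$ (a single-ball obstruction), after which it follows at once from (b); for $k \geq 2$, hypothesis (c) already forces $R_i < R$ for every $i$.
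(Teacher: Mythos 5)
Your proof is correct (for $k \geq 2$, the case of interest), and the preliminary bound $M = \sum_i m_i \leq 2d$ you extract from hypothesis (a) is identical to the paper's. Where you diverge is in how you pass from this bound and hypotheses (b), (c) to the conclusion $\sum_i m_i R_i \leq dR$. The paper takes the direct route: sort so that $R_1 \geq \cdots \geq R_k$, note $\sum_i m_i R_i \leq m_1 R_1 + (M - m_1)R_2 \leq d R_1 + d R_2 \leq dR$, and stop. You instead run a full linear-programming analysis, enumerate the vertices of the polytope $P = \{R_i \geq 0,\ R_i + R_j \leq R\}$, and establish the standalone estimate $\sum_i m_i R_i \leq \max(\max_i m_i,\ M/2)\cdot R$, from which the conclusion follows since both arguments of the max are at most $d$. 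Your vertex classification is correct: the $|S|=2$ case cannot furnish enough independent active constraints, and for $|S| \geq 3$ the odd cycles among the pair-constraints force $R_i = R/2$ on $S$. This is a cleaner intermediate statement but a longer argument; the paper's two-line sorting trick buys the same conclusion more cheaply. You also flag, correctly, that the $k=1$ case of the lemma as literally stated needs the additional input $R_1 \leq R$ --- the paper's proof has the identical gap (its ``$dR - dR_1 - dR_2$'' presupposes $k \geq 2$), and it is harmless in the intended application to ball packings where $R_i < R$ is automatic, but you are right to note it.
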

\begin{proof}
Assume without loss of generality that we have $a_1 \geq \cdots \geq a_k$.
Using \ref{item:eq} we have
\begin{align*}
\sum_{i=1}^k m_i \leq \frac{n-3 + (n+1)d}{n-1} \leq 2d,
\end{align*}
and hence using \ref{item:one_ball} and \ref{item:two_ball} we have 
\begin{align*}
\En_\veca(d;m_1,\dots,m_k) \geq dR - dR_1 - dR_2 \geq 0.
\end{align*}
\end{proof}

Lemma~\ref{lem:beyond_two_ball} says roughly that any pseudoholomorphic curve bound obtained as in the above proof of Theorem~\ref{thm:two_ball} cannot improve upon the bound coming from Gromov's two ball theorem (applied to each pair $i,j$).
Indeed, to see this, consider a homology class of the form $[C] := dA - \sum_{i=1}^k m_i E_i \in H_2(\bl_{\{\iota_i\}}\CP^n(R))$ such that $\ind_g(d;m_1,\dots,m_k) \ge 0$.
Suppose we knew that for any symplectic embedding $\iota: \bigsqcup\limits_{i=1}^k B^{2n}(R_i) \hooksymp \CP^n(R)$ there exists a tame almost complex structure $J$ on $\bl_{\{\iota_i\}}\CP^n(R)$ so that the compactified moduli space $\ovl{\calM}_{0,0}(\bl_{\{\iota_i\}}\CP^n(R),[C]; J)$ is nonempty.
For example, this holds if the absolute Gromov--Witten invariant $GW^{\bl_{\{\iota_i\}}\CP^n(R)}_{0,0,[C]}$ is nonzero, or more generally if there is a nonvanishing Gromov--Witten invariant relative to the exceptional divisors in the homology class $[C]$.

In this case, given any ball packing $\bigsqcup\limits_{i=1}^k B^{2n}(R_i') \hooksymp \CP^n(R)$, by energy considerations as in the proof of Theorem~\ref{thm:two_ball} we must have $\sum_{i=1}^k m_iR_i' \leq dR$. In particular, since we can always find such a  ball packing with $R_1' = R-\eps$ for $\eps >0$ sufficiently small, we must have $(1-\eps)m_1 \leq d$, and hence $m_1 \leq d$. Similarly, we must have $m_2,\dots,m_k \leq d$.
Note that \ref{item:two_ball} in Lemma~\ref{lem:beyond_two_ball} holds by Gromov's two ball theorem.
Then Lemma~\ref{lem:beyond_two_ball} says that the energy of any curve $C \in \ovl{\calM}_{0,0}(\bl_{\{\iota_i\}}\CP^n(R),[C]; J)$ already satisfies $\En(C) \geq 0$, i.e. we do not obtain any further obstruction.

The above discussion also easily extends to punctured curves say in the symplectic completion of the complementary symplectic cobordism $X := B^{2n}(R) \setminus \bigsqcup\limits_{i=1}^k \iota(\nt B^{2n}(R_i))$ of a ball packing $\iota: \bigsqcup\limits_{i=1}^k B^{2n}(R_i) \hooksymp B^{2n}(R)$.
Namely, put $\bdy^+ X := \bdy B^{2n}(1)$ and $\bdy^- X := \bigsqcup\limits_{i=1}^k \iota(\bdy B^{2n}(R_i))$,
and let $\wh{X} = X \cup (\bdy^+X \times \R_{\geq 0}) \cup (\bdy^-X \times \R_{\leq 0})$ denote the symplectic completion of $X$.\footnote{Strictly speaking the contact boundaries $\bdy^\pm X$ have Morse--Bott Reeb dynamics, but if desired we can slightly perturb each round ball into an irrational ellipsoid, in which case the same discussion carries over with minor adjustments.}

Given a finite energy punctured curve $C$ of genus $g$ in $\wh{X}$, we can associate to each puncture the multiplicity of the corresponding Reeb orbit. 
Let $d$ denote the summed multiplicity of all punctures which are asymptotic to Reeb orbits in $\bdy^+X$. Similarly, for $i = 1,\dots,k$, let $m_i$ denote the summed multiplicity of all punctures which are asymptotic to Reeb orbits in the $i$th boundary component of $\bdy^-X$.
Then it is easy to check that the Fredholm index and energy of $C$ satisfy $\ind(C) \leq \ind_g(d;m_1,\dots,m_k)$ and $\En(C) = \En_\veca(d;m_1,\dots,m_k)$.
The association of values $d,m_1,\dots,m_k$ to a curve $C$ naturally extends to elements of the SFT compactification by pseudoholomorphic buildings.
The upshot is that given any mechanism for producing punctured curves in $\wh{X}$ with nonnegative index which insensitive to the sizes $R_1,\dots,R_k$, the corresponding inequality given by applying Stokes' theorem cannot beat Gromov's two ball theorem.

Specializing to balls of equal size, note that for an embedding $\bigsqcup\limits_{i=1}^k B^{2n}(R) \hooksymp \CP^n(1)$, the two ball obstruction gives $a \leq \tfrac{1}{2}$, so Conjecture~\ref{conj:high_d_ball_packing} amounts in this case to:
\begin{align*}
v(\CP^n,k) = 
\begin{cases}
  \tfrac{k}{2^n} & k = 1,\dots,2^{n-1}\\
  1 & k \geq 2^n.
\end{cases}
\end{align*}
This holds for $k = 1,\dots,2^n$, since by \cite{mcduff1994symplectic} we have $v(\CP^n,k^n) = 1$ for all $k \in \Z_{\geq 1}$, and in particular we have an embedding $\bigsqcup\limits_{i=1}^k B^{2n}(1/2-\eps) \hooksymp \CP^n(1)$ for $k = 1,\dots,2^n$ and any $\eps > 0$.
Furthermore, it was also shown in \cite{buse2011symplectic} that $v(\CP^n,k) = 1$ for all $k \geq M_n$, where we can take $M_n = \lceil (8\tfrac{1}{36})^{\tfrac{n}{2}}\rceil$ (c.f. \cite[Rmk. 7.1.31]{mcduff2017introduction}).

\section{Stabilized Gromov two ball theorem}
\begin{proof}[Proof of Theorem \ref{thm:stabilized_Gromov_two_ball}]
We consider the two embeddings given by $f_i: B^{2n}(R_i) \times N \hooksymp \CP^n(R) \times N$ for $i=1,2$. Let $\alpha$ denote the homology class $[\CP^n] \times [\pt]$, and let $\beta$ denote the point class in $\CP^n\times N$. Then a straightforward generalization of theorem \ref{thm:stab_hol_curve} shows
\[
GW_{0,2,A\times \pt}^{\CP^n(R)\times N}(\alpha,\beta) =1.
\]
This essentially comes from the fact in $\CP^n$ for any two points there is a unique curve passing through them. 
Now choose $J$ on $\CP^n \times N$ so that its restriction to the image of $f_i$ restricts to a split almost complex structure. Moreover it is the standard almost complex structure on $B^{2n}(R_i)$. Now such a $J$ might not be generic for the Gromov-Witten invariant $GW_{0,2,A\times \pt}^{\CP^n(R)\times N}(\alpha,\beta)$. So take a sequence of generic $J_n$ so that $J_n\rightarrow J$ in the $C^\infty$ norm as $n\rightarrow \infty$. Consider the surfaces $\Sigma_i : = f_i(\{0\} \times N)$. We take a sequence of generic perturbations, so that $\Sigma_i^n \rightarrow \Sigma$ as $n\rightarrow \infty$ in the $C^\infty$ norm.

Then the condition that $GW_{0,2,A\times \pt}^{\CP^n(R)\times N}(\alpha,\beta) =1$ implies there exists a nontrivial $J_n$-holomorphic curve that passes through $\Sigma_1^n$ and $\Sigma_2^n$ for each $n$. Now take $n\rightarrow \infty$, since the homology class $A\times \pt$ is primitive and has the lowest possible energy for $J$-holomorphic curves, no degeneration can happen. Therefore there exists a $J$-holomorphic curve $C$, that passes through both $\Sigma_1$ and $\Sigma_2$. Applying the monotonicity lemma to the $B^4(R_i)$ component in the image of $f_i$ recovers $R_1 +R_2 \leq R$. 
\end{proof}
\bibliography{biblio}{}
\bibliographystyle{amsalpha}

\end{document}